\newtheorem{theorem}{Theorem}[section]
\newtheorem{lemma}[theorem]{Lemma}
\theoremstyle{definition}
\newtheorem{example}[theorem]{Example}
\theoremstyle{remark}
\newtheorem{remark}[theorem]{Remark}
\numberwithin{equation}{section}
\DeclareMathOperator{\dimp}{dim_p}
\DeclareMathOperator{\dimH}{dim_H}
\DeclareMathOperator{\spt}{spt}
\DeclareMathOperator{\por}{por}
\DeclareMathOperator{\dist}{dist}
\DeclareMathOperator{\dimloc}{\overline{dim}_{loc}}
\DeclareMathOperator*{\essinf}{ess\phantom{:}inf}
\begin{document}

\title{Packing dimension of mean porous measures}

\author[DB,EJ,MJ,AK,TR,SS,VS]{D. Beliaev, E. J\"arvenp\"a\"a, 
        M. J\"arvenp\"a\"a, A. K\"aenm\"aki, T. Rajala,\\ S. Smirnov and 
        V. Suomala} 

\address{DB: Department of Mathematics\\ 
         Princeton University\\
         Fine Hall, Washington Road
         Princeton NJ 08544-1000\\
         USA} 
\address{EJ,MJ,AK,TR and VS: Department of Mathematics and Statistics \\
         P.O. Box 35 (MaD) \\
         FIN-40014 University of Jyv\"askyl\"a \\
         Finland}
\address{SS: Department of Mathematics\\
         University of Geneva\\
         2-4 rue du Li\`evre, Case postale 64\\
         1211 Gen\`eve 4\\
         Switzerland}

\email{dbeliaev@math.princeton.edu}
\email{esaj@maths.jyu.fi}
\email{amj@maths.jyu.fi}
\email{antakae@maths.jyu.fi}
\email{tamaraja@maths.jyu.fi}
\email{Stanislav.Smirnov@math.unige.ch}
\email{visuomal@maths.jyu.fi}

\keywords{Porosity, mean porosity, packing dimension, Radon measures}

\subjclass[2000]{28A75, 28A80}

\thanks{EJ, MJ, AK, TR and VS acknowledge the support of the Academy of Finland
(projects \# 211229 and \# 114821) and the Centre of Excellence in Analysis
and Dynamics Research. EJ also thanks the hospitality of the 
University of Geneva. DB, EJ and SS acknowledge the support of the Swiss 
National Science Foundation. TR appreciates the financial support of Vilho, 
Yrj\"o and Kalle V\"ais\"al\"a foundation. Finally, we all thank the referee
for valuable comments clarifying the exposition.}

\begin{abstract}
We prove that the packing dimension of any mean porous Radon measure 
on $\mathbb R^d$ may be estimated from above by a function which depends on
mean porosity. The upper bound tends to $d-1$ as mean porosity tends
to its maximum value. This result was stated in \cite{BS}, and in a weaker form
in \cite{JJ1}, but the proofs are not correct. Quite surprisingly, it turns out
that mean porous measures are not necessarily approximable by 
mean porous sets. We verify this by constructing an example of 
a mean porous measure $\mu$ on $\mathbb R$ such that $\mu(A)=0$ for all 
mean porous sets $A\subset\mathbb R$. 
\end{abstract}

\maketitle

\section{Introduction}\label{intro}

Intuitively, it seems obvious that if a set contains relatively large holes 
at all small scales then the dimension of the set should be smaller than that 
of the ambient space. This observation was generalized and made 
into a quantitative form 
by Mattila \cite{Ma1} in terms of a concept called porosity which describes 
the sizes of holes at all small scales (for the definition see Section 
\ref{definitions}). Mattila proved that if the porosity of a subset of
$\mathbb R^d$ is close to its maximum value $\tfrac 12$ then its Hausdorff 
dimension cannot be much bigger than $d-1$. The correct asymptotic behaviour 
was established by Salli \cite{S}. He also showed that Hausdorff dimension 
may be replaced by packing dimension, and moreover, by box counting 
dimension under the assumption that the set is uniformly porous.

The above mentioned result for Hausdorff dimension fails if the set contains
large holes only at sequences of arbitrarily small scales; there are examples 
of such sets in $\mathbb R^d$ with Hausdorff dimension $d$ \cite{Ma2}. 
Nevertheless, the assumption that the set has relatively large holes at all 
small scales may be weakened to obtain an upper bound for Hausdorff dimension, 
or more generally, for packing dimension. In fact, it is 
sufficient to suppose that a certain percentage of scales contains holes. 
This leads to the concept of mean porosity (for the definition see 
Section \ref{definitions}). Dimensional properties of such sets were 
considered by Koskela and Rohde \cite{KR} in the case of small mean porosity, 
and by Beliaev and Smirnov in the case of large one. 
For other related results, see \cite{JJKS}.
 
In this paper, the emphasis is given to packing dimensions of mean porous 
measures
(for the definition see Section \ref{definitions}). Porous measures were 
introduced by Eckmann, J\"ar\-ven\-p\"a\"a and J\"arvenp\"a\"a \cite{EJJ} 
whilst the analogue of Mattila's result was verified for porous measures in 
\cite{JJ1}. Note that in \cite{JJ1} the results are claimed 
for packing dimension but the argument works only for Hausdorff 
dimension as explained in \cite{JJ2}. 

The study of mean porous measures was pioneered by Beliaev and Smirnov
\cite{BS}. In \cite{BS} the proof of the statement that the same upper bound 
which is valid for packing dimensions of mean porous sets holds for 
mean porous measures as well is based on a proposition claiming that 
mean porous measures 
are approximable by mean porous sets. However, this is not the case: 
in  Theorem \ref{thm} we construct a mean porous measure $\mu$ such 
that all mean porous sets have zero $\mu$-measure. The main purpose of this
paper is to develop a new method to show that the statements of 
\cite{BS,JJ1} are true (see Theorem \ref{theorem:pdimofporo}) even though the
proofs are not correct. 

The paper is organized as follows: In Section \ref{definitions} we discuss 
the basic concepts. Section \ref{mainsection} is dedicated to the proof 
of our main result. Besides this, we illustrate by an example that 
the upper bound which we obtain is asymptotically the best possible one. 
Finally, in the last section we construct an example of a mean porous measure 
which is not approximable by mean porous sets.

\section{Basic concepts}\label{definitions}

In this section, we give the basic definitions used throughout the paper.
Intuitively, the porosity of a set gives for all small scales the relative
radius of the largest ball which fits into a reference ball centered at the
set and which does not intersect the set.
Let $A\subset\mathbb{R}^d$. For all $x\in\mathbb R^d$ and $r>0$, we define 
\[
\por(A,x,r)=\sup\{\alpha\ge0\,:\,B(y,\alpha r)\subset B(x,r)\setminus A
\text{ for some }y\in\mathbb{R}^d\}.
\]
Here $B(x,r)$ is the closed ball with centre at $x$ and radius $r$.
Clearly, $0\le\por(A,x,r)\le\tfrac 12$ for all $x\in A$. Given 
$0\le\alpha\le\tfrac 12$, the set $A$ is said to be $\alpha$-porous at 
$x$ if
\[
\liminf_{r\to0}\por(A,x,r)\geq\alpha.
\]
Moreover, $A$ is 
$\alpha$-porous if it is $\alpha$-porous at every point $x\in A$.

For measures, the corresponding concepts are defined as follows:
Let $\mu$ be a Radon measure on $\mathbb{R}^d$. For all $x\in\mathbb{R}^d$ and 
for all positive real numbers $r$ and $\varepsilon$, set
\begin{align*}
\por(\mu,x,r,\varepsilon)
 =&\sup\{\alpha\ge0\,:\,\text{there is $z\in\mathbb{R}^d$ such that }\\
  & B(z,\alpha r)\subset B(x,r)\text{ and }\mu(B(z,\alpha r))
     \le\varepsilon\mu(B(x,r))\}.
\end{align*}
Given  $\alpha\ge0$, the measure $\mu$ is $\alpha$-porous at 
a point $x\in\mathbb{R}^d$ if 
\[
\lim_{\varepsilon\to0}
\liminf_{r\to0}\por(\mu,x,r,\varepsilon)\ge\alpha.
\]
The order of taking limits is important here: if we changed it we would
obtain the porosity of $\spt\mu$, the support of $\mu$.
Finally, the measure $\mu$ is $\alpha$-porous if there is 
$A\subset\mathbb{R}^d$ with $\mu(A)>0$ such that $\mu$ is $\alpha$-porous 
at every point $x\in A$. It is not difficult to see that in this case 
$0\le\alpha\le\tfrac 12$. For more information on porosity of measures,
see \cite{EJJ}.

Larger classes of mean porous sets and measures are obtained by demanding that 
a certain percentage of scales - not necessarily all small ones - are porous.
Given $\alpha\ge0$ and a positive integer $j$, the set $A$ is
$\alpha$-porous for scale $j$ at a point $x\in\mathbb{R}^d$ 
whenever $\por(A,x,2^{-j})\geq \alpha$. For $0<p\leq 1$, the set $A$ 
is called mean $(\alpha,p)$-porous at a point $x\in\mathbb{R}^d$ if
\[
\liminf_{i\rightarrow\infty}\frac{\#\{1\leq j\leq i\,:\,
        \por(A,x,2^{-j})\geq\alpha\}}{i}\geq p.
\]
Here the cardinality of a set is denoted by $\#$. We say that $A$ is 
mean $(\alpha,p)$-porous if it is mean $(\alpha,p)$-porous at every point 
$x\in A$. The measure $\mu$, in turn, is mean $(\alpha,p)$-porous at $x$ if  
\[
\lim_{\varepsilon\to0}\liminf_{i\rightarrow\infty}\frac{\#\{1\leq j\leq i\,:\,
    \por(\mu,x,2^{-j},\varepsilon)\geq \alpha\}}{i}\geq p.
\]
Finally, $\mu$ is mean $(\alpha,p)$-porous if there is 
$A\subset\mathbb{R}^d$ with $\mu(A)>0$ such that $\mu$ is mean 
$(\alpha,p)$-porous at all points $x\in A$.

The packing dimension, $\dimp$, of a Radon measure $\mu$ on $\mathbb{R}^d$ 
is defined in terms of local dimensions as follows:
\[
\dimp\mu=\mu\text{-}\essinf_{x\in\mathbb R^d}\dimloc\mu(x)
\]
where
\[
\dimloc\mu(x)=\limsup_{r\downarrow 0}\frac{\log(\mu(B(x,r)))}{\log r}
\]
and $\mu\text{-}\essinf$ means the essential infimum with respect to $\mu$.
Equivalently, the packing dimension of $\mu$ is given by means of packing
dimensions of Borel sets with positive $\mu$-measure \cite{C}:
\[  
\dimp\mu=\inf\{\dimp A\,:\, A\text{ is a Borel set with }\mu(A)>0\}.
\] 

\begin{remark}\label{upperdim} Replacing the essential infimum by the 
essential supremum in the above definition leads to the concept of  
upper packing dimension. Using the fact that restricting a measure will not
decrease the porosity \cite{EJJ}, we see that Theorem \ref{theorem:pdimofporo}
is valid for the upper packing dimension of $\mu$ as well provided 
that $\mu$ is mean porous $\mu$-almost everywhere.
\end{remark}

\section{Packing dimension of measures with large mean porosity}
\label{mainsection}

In this section we prove the following packing dimension estimate for
mean porous measures:

\begin{theorem}\label{theorem:pdimofporo} Let $0\le\alpha\le\tfrac12$ and
$0<p\le1$.
There exists a constant $C$ depending only on $d$ such that for all
mean $(\alpha,p)$-porous Radon measures $\mu$ on $\mathbb{R}^d$ we have
\[\dimp\mu\le d - p +\frac{C}{\log(\tfrac 1{1-2\alpha})}.\]
\end{theorem}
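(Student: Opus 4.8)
The plan is to reduce the statement about measures to a counting estimate at a single fixed scale, in the spirit of Salli's argument for sets, but carried out probabilistically/combinatorially so as to avoid passing through porous sets. Fix a mean $(\alpha,p)$-porous measure $\mu$, and let $A$ be a set of positive measure at whose points $\mu$ is mean $(\alpha,p)$-porous. By restricting $\mu$ to a subset of $A$ of positive measure (which does not decrease porosity, by \cite{EJJ}) and using Egorov-type reasoning, I may assume there is a single $\varepsilon>0$, a single $i_0$, and a set $A'$ with $\mu(A')>0$ such that for every $x\in A'$ and every $i\ge i_0$, at least $p'i$ of the scales $1\le j\le i$ satisfy $\por(\mu,x,2^{-j},\varepsilon)\ge\alpha'$, where $\alpha'<\alpha$ and $p'<p$ are arbitrarily close to $\alpha,p$ — the final $C/\log\frac1{1-2\alpha}$ term absorbs this loss once $\alpha'$ is chosen, say, $=\alpha-\,$(something controlled). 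From here the goal is to show $\dimloc\mu(x)\le d-p'+C'/\log\frac1{1-2\alpha'}$ for $\mu$-a.e.\ $x\in A'$, i.e.\ to exhibit arbitrarily small radii $r$ with $\mu(B(x,r))\ge r^{\,d-p'+C'/\log\frac1{1-2\alpha'}}$.

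The core of the argument is a one-scale dyadic covering lemma. Partition $\mathbb R^d$ into dyadic cubes of side $2^{-j}$. If $x\in A'$ and scale $j$ is $\alpha'$-porous for $\mu$ at $x$ with parameter $\varepsilon$, then inside the ball $B(x,2^{-j})$ there is a ball of radius $\alpha'2^{-j}$ carrying at most an $\varepsilon$-fraction of $\mu(B(x,2^{-j}))$; consequently, a controlled proportion of the dyadic subcubes of size $\sim\alpha'2^{-j}$ sitting inside $B(x,2^{-j})$ are "light'' (carry a small share of the mass). Iterating this over the $\ge p'i$ porous scales among the first $i$, one builds a nested sequence of cubes $Q_0\supset Q_1\supset\cdots$ down to side $\sim 2^{-i}$, at each porous step discarding the heavy neighborhood and keeping a light child, so that along this chain the mass drops geometrically at the porous scales while it can drop by at most the trivial factor (number of children, i.e.\ a power of $2$ in each coordinate, controlled by $\log\frac1{\alpha'}\approx\log\frac1{1-2\alpha'}$ per halving) at the non-porous scales. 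Quantitatively, passing from side $2^{-j}$ to side $\sim\alpha'2^{-j}$ and comparing with the standard dyadic refinement shows each porous scale effectively ``costs'' about $\log_2\frac1{\alpha'}$ halvings of the side while the mass stays comparable, which is exactly where the $1/\log\frac1{1-2\alpha}$ appears after one checks $\log\frac1{\alpha'}\asymp\log\frac1{1-2\alpha'}$ near $\alpha=\tfrac12$. Summing the mass bookkeeping over the chain and comparing the total number of effective halvings (hence the final radius) with the total mass loss yields, for a suitable small radius $r$ attached to $Q_i$, the bound $\mu(B(x,r))\ge c\, r^{\,d-p'}\,\big(\text{correction}\big)$ with the correction of the stated form.

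To make ``a controlled proportion of light children'' rigorous one needs a pigeonhole/averaging step: a single ball of small mass of radius $\alpha'2^{-j}$ need not be aligned with the dyadic grid, but it contains at least one dyadic cube of side $\sim\alpha'2^{-j}/\sqrt d$ (for $j$ large relative to $\alpha'$), and that dyadic cube is light; moreover one must ensure the chain can be continued, i.e.\ that the light child itself still meets $A'$ — this is handled by a Fubini/mass-distribution argument showing that $\mu$-almost every $x\in A'$ lies in such a chain, or alternatively by running the estimate for the dimension of $A'$ directly via the characterization $\dimp\mu=\inf\{\dimp A:\mu(A)>0\}$ together with a Vitali-type packing of the light cubes. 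The main obstacle, and the reason the naive ``approximate by porous sets'' approach of \cite{BS} fails, is precisely this last point: one cannot fix in advance a single porous set carrying the mass, so the combinatorial chain must be built $\mu$-measure-theoretically, keeping track simultaneously of mass and of membership in $A'$ down every branch; getting the constant $C$ to depend only on $d$ (and not on $\alpha,p$) requires that every geometric comparison — ball vs.\ dyadic cube, radius vs.\ side length, number of children per refinement — be done with $d$-dependent but $\alpha,p$-independent constants, which is the delicate bookkeeping of the proof.
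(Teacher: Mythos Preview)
Your proposal has a fundamental direction error. To bound $\dimloc\mu(x)$ from above you must exhibit radii $r$ with $\mu(B(x,r))$ \emph{large} (not decaying faster than $r^D$). But the ``light child'' you locate at a porous scale---a dyadic cube inside the hole $B(z,\alpha'2^{-j})$---has \emph{small} mass and, crucially, does \emph{not} contain $x$: the porous point $x$ lies outside the hole, in the heavy region. A chain that ``discards the heavy neighbourhood and keeps a light child'' is therefore not a chain shrinking to $x$, and the mass along it says nothing about $\mu(B(x,r))$. Your remedy (``ensure the light child still meets $A'$'') cannot work either: the hole has $\mu$-measure at most $\varepsilon\mu(B(x,2^{-j}))$, so for small $\varepsilon$ it typically misses $A'$ altogether, and in any case you would then be estimating the local dimension at some other point, not at $x$. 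You also assert $\log\tfrac{1}{\alpha'}\asymp\log\tfrac{1}{1-2\alpha'}$, which is false (the left side tends to $\log 2$, the right to $\infty$, as $\alpha'\to\tfrac12$); this is a symptom of the same confusion: the relevant small length is $(1-2\alpha')2^{-j}$, the thickness of the shell where the mass must live, not $\alpha'2^{-j}$.

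The missing idea is the structural fact that drives the theorem: when $\alpha$ is close to $\tfrac12$, the mass of $B(x,2^{-j})$ outside the hole is confined to a neighbourhood of a $(d-1)$-dimensional surface of thickness $\sim(1-2\alpha)2^{-j}$, so after subdividing into cubes of that side (say $2^{-k}\sim 1-2\alpha$), only $O(2^{k(d-1)})$ of the $2^{kd}$ children carry essentially all the mass. The paper makes this precise via the decomposition $Q=E\cup P\cup J$ (Lemma~\ref{lemma:boundary}) and then, instead of following a single chain, runs a global weighted $\ell^p$-type estimate over \emph{all} subcubes simultaneously: Lemma~\ref{lemma:dimensionlemma} reduces $\dimp\mu\le D$ to bounding sums $\sum r_Q^{\tau(Q)}\mu(Q)^{1-\tau(Q)/D}$, weights $\beta(Q)$ credit porous cubes with $(d-1)$-growth and non-porous cubes with $d$-growth, an inductive H\"older argument (Lemma~\ref{lemma:uniformestimate}) shows the weighted sums contract, and Lemma~\ref{lemma:prod} converts ``$p$ fraction of porous scales'' into exponential growth of the weight product. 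There is also a genuine boundary issue your sketch does not address---the enlarged cube $(1+2^{l+1})Q'$ controlling $\mu(E)$ need not sit inside its parent---which is precisely why the paper introduces the block length $n$ and the interior/boundary split. Your Egorov-type reduction in the first paragraph is fine and matches the paper; the combinatorial core, however, has to be rebuilt around the $(d-1)$-surface picture, not around light children.
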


At the end of this section, we give a construction (Example 
\ref{asymptoticallybest}) which indicates that the upper bound of Theorem 
\ref{theorem:pdimofporo} is asymptotically the best possible one as $\alpha$ 
tends to $\tfrac 12$. The proof of Theorem \ref{theorem:pdimofporo} is given as
a series of lemmas. The first one serves as a key tool in the proof of 
our main result. We use the symbol $r_Q$ for the side-length of a cube 
$Q\subset\mathbb R^d$. 

\begin{lemma}\label{lemma:dimensionlemma} Let $m,i_0\in\mathbb{N}$ and $D>0$.
Let $\mu$ be a Radon measure on $\mathbb{R}^d$ with 
$0<\mu(\mathbb{R}^d)<\infty$. Assume that all 
disjoint collections $\mathcal{Q}$ of half-open $2^m$-adic cubes with 
side-length at most $2^{-mi_0}$ have the following property: 
for all $Q\in\mathcal Q$ there is $0<\tau(Q)<D$ such that
\[
\sum_{Q\in\mathcal{Q}}r_{Q}^{\tau(Q)}\mu(Q)^{1-\frac{\tau(Q)}{D}} <\mu(\mathbb{R}^d).
\] 
Then
\[
\dimp\mu \le D.
\]
\end{lemma}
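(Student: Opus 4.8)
The plan is to bound the packing dimension $\dimp\mu$ by controlling the local dimension $\dimloc\mu(x)$ on a set of full $\mu$-measure, using the hypothesis applied to suitably chosen disjoint collections of $2^m$-adic cubes. Recall that $\dimp\mu=\mu\text{-}\essinf_x\dimloc\mu(x)$, so it suffices to show that for every $s>D$ the set $E_s=\{x:\dimloc\mu(x)<s\}$ has $\mu$-measure zero; equivalently, we must show $\mu(\{x:\dimloc\mu(x)\ge s\})=\mu(\mathbb R^d)$ whenever $s>D$, or argue by contradiction that if $\mu(E_s)>0$ for some $s>D$ we violate the summation inequality. I would fix $s>D$, suppose $\mu(E_s)>0$, and derive a contradiction.

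The key step is the extraction of a good disjoint family. For $x\in E_s$ we have $\limsup_{r\downarrow0}\frac{\log\mu(B(x,r))}{\log r}<s$, so for all sufficiently small $r$, $\mu(B(x,r))\ge r^{s}$; passing to $2^m$-adic cubes $Q\ni x$ of side-length $r_Q=2^{-mk}$ small enough, one gets (up to a dimensional comparison constant between balls and cubes) a bound of the form $\mu(Q)\ge c\, r_Q^{s}$ for all large $k$. Using a Vitali-type / stopping-time argument inside each large $2^m$-adic cube, I would select a disjoint collection $\mathcal Q$ of half-open $2^m$-adic cubes of side-length $\le 2^{-mi_0}$ that covers $\mu$-almost all of $E_s$ (in fact we can take $\mathcal Q$ to exhaust a fixed large cube, or the whole space after restricting to a bounded piece carrying positive mass), with each chosen cube $Q$ satisfying $\mu(Q)\ge c\, r_Q^{s}$.

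Now I apply the hypothesis: there exist $\tau(Q)\in(0,D)$ with $\sum_{Q\in\mathcal Q} r_Q^{\tau(Q)}\mu(Q)^{1-\tau(Q)/D}<\mu(\mathbb R^d)$. The point is to turn each summand into a lower bound comparable to $\mu(Q)$, which would force $\sum_{Q}\mu(Q)$ to be strictly less than $\mu(\mathbb R^d)$ while $\mathcal Q$ covers (almost) all the mass — the contradiction. Writing $\theta=\tau(Q)/D\in(0,1)$, we have
\[
r_Q^{\tau(Q)}\mu(Q)^{1-\theta}=\mu(Q)\left(\frac{r_Q^{D}}{\mu(Q)}\right)^{\theta}.
\]
Since $\mu(Q)\ge c\,r_Q^{s}$ with $s>D$ and $r_Q\le 2^{-mi_0}$ small, the ratio $r_Q^{D}/\mu(Q)\le c^{-1}r_Q^{D-s}$ is large; raised to any positive power $\theta$ it is $\ge$ a constant $>1$ once $i_0$ is taken large enough (here one must be slightly careful because $\theta$ may be tiny, so I would instead arrange $\mu(Q)\le r_Q^{D'}$ for some $D<D'<s$ by the same small-scale argument, giving $r_Q^{D}/\mu(Q)\ge r_Q^{D-D'}\ge 2^{m i_0(D'-D)}$, and then $(r_Q^{D}/\mu(Q))^{\theta}\ge 1$ for $\theta\ge 0$ as soon as $r_Q\le 1$ — uniformly in $\theta$). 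Hence each summand is $\ge \mu(Q)$, so $\sum_{Q\in\mathcal Q}\mu(Q)\le\sum_{Q\in\mathcal Q}r_Q^{\tau(Q)}\mu(Q)^{1-\tau(Q)/D}<\mu(\mathbb R^d)$, contradicting the fact that $\mathcal Q$ carries full (or nearly full) $\mu$-measure. Therefore $\mu(E_s)=0$ for every $s>D$, which gives $\dimp\mu\le D$.

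The main obstacle I anticipate is the bookkeeping in the selection step: ensuring the disjoint $2^m$-adic family $\mathcal Q$ simultaneously (i) consists of cubes with side-length $\le 2^{-mi_0}$, (ii) has each cube satisfying the pointwise lower mass bound coming from the local-dimension hypothesis, and (iii) captures essentially all of $\mu|_{E_s}$ so that $\sum_Q\mu(Q)$ is close to a positive number that must nonetheless be $<\mu(\mathbb R^d)$ — the strict inequality in the hypothesis is what makes this work, but one has to be careful that "essentially all" can be made genuinely equal to the mass of a positive-measure piece (e.g. by first restricting $\mu$ to a bounded set $E_s'\subset E_s$ with $\mu(E_s')>0$ and noting restriction does not hurt, or by a limiting/$\sigma$-compactness argument), and that the constant $c$ and the choice of $D'$ can be absorbed by taking $i_0$ large, which is legitimate since the hypothesis is assumed to hold for that given $i_0$. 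Everything else is a short computation with the elementary inequality above.
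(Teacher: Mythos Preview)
Your setup reverses the direction of the argument. You write that it suffices to show $\mu(E_s)=0$ for every $s>D$, where $E_s=\{x:\dimloc\mu(x)<s\}$. But $\mu(E_s)=0$ for all $s>D$ says that $\dimloc\mu(x)\ge s$ for $\mu$-a.e.\ $x$ and every $s>D$, which yields $\dimp\mu\ge D$ (indeed $\dimp\mu\ge s$ for all $s>D$), the opposite of what you want. Correspondingly, from $x\in E_s$ you correctly deduce the \emph{lower} bound $\mu(B(x,r))\ge r^s$ for small $r$; this gives $r_Q^{D}/\mu(Q)\le c^{-1}r_Q^{D-s}$, an \emph{upper} bound on the ratio, so your summand $\mu(Q)(r_Q^{D}/\mu(Q))^{\theta}$ need not dominate $\mu(Q)$. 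The mid-course switch to ``arrange $\mu(Q)\le r_Q^{D'}$ by the same small-scale argument'' is not legitimate: the condition $\dimloc\mu(x)<s$ gives no upper bound on $\mu(Q)$ at any scale.

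The paper's argument starts from the correct contradiction hypothesis $\dimp\mu>D$, so that $\dimloc\mu(x)>D$ for $\mu$-a.e.\ $x$. Since this is a $\limsup$, for a.e.\ $x$ there exist \emph{arbitrarily small} $2^m$-adic cubes $Q_x\ni x$ with $\mu(Q_x)<r_{Q_x}^{D}$; one then selects a disjoint family $\mathcal Q$ of such cubes (side-length $\le 2^{-mi_0}$) covering $\mu$-a.e.\ point. The inequality is then immediate and uniform in $\tau(Q)\in(0,D)$: from $\mu(Q)<r_Q^{D}$ one has $\mu(Q)^{\tau(Q)/D}<r_Q^{\tau(Q)}$, hence
\[
\mu(\mathbb R^d)=\sum_{Q\in\mathcal Q}\mu(Q)=\sum_{Q\in\mathcal Q}\mu(Q)^{\tau(Q)/D}\mu(Q)^{1-\tau(Q)/D}<\sum_{Q\in\mathcal Q}r_Q^{\tau(Q)}\mu(Q)^{1-\tau(Q)/D}<\mu(\mathbb R^d),
\]
a contradiction. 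No auxiliary $s$, $D'$, or constants are needed, and the worry about ``$\theta$ tiny'' disappears. Your selection paragraph is essentially right once the direction is fixed, but note you need cubes where the mass is \emph{small} ($\mu(Q)<r_Q^D$), which come from the $\limsup>D$ side, not cubes where the mass is large.
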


\begin{proof}
Suppose to the contrary that $\dimp\mu>D$. Then
$\dimloc\mu(x)>D$ for $\mu$-almost every $x\in\mathbb{R}^d$.
Recall from \cite[Lemma 2.3]{C} that for $\mu$-almost all $x\in\mathbb R^d$
the local dimension $\dimloc\mu(x)$ may be calculated using half-open 
$2^m$-adic cubes 
containing $x$ instead of balls $B(x,r)$. Hence, for $\mu$-almost every
$x\in\mathbb{R}^d$ we may choose a cube $Q_x$ containing $x$ and with 
side-length 
$2^{-mi}$ for some $i>i_0$ such that $\mu(Q_x)<r_{Q_x}^D$.
Let $\mathcal{Q}$ be a disjoint collection of such cubes
covering $\mu$-almost all points of $\mathbb{R}^d$. Then
\[
\mu(\mathbb{R}^d)=\sum_{Q\in\mathcal{Q}}\mu(Q)^{\frac{\tau(Q)}{D}}
     \mu(Q)^{1-\frac{\tau(Q)}{D}}
   <\sum_{Q\in\mathcal{Q}}r_Q^{\tau(Q)}\mu(Q)^{1-\frac{\tau(Q)}{D}}
   <\mu(\mathbb{R}^d),
\]
which is a contradiction.
\end{proof}

The following lemma shows that when cubes $Q$ are of the same size we can 
approximate the sums $\sum_Qr_{Q}^\tau\mu(Q)^{1-\frac{\tau}{D}}$ from above by 
distributing the measure evenly on the cubes. 

\begin{lemma}\label{lemma:worstcase} Let $D>0$. Assume that  $0<\tau<D$ and
$Q^1,\ldots,Q^N\subset\mathbb{R}^d$ are disjoint cubes with side-length $r$.
Then for any Radon measure $\mu$ on $\mathbb R^d$
\[
 \sum_{j=1}^Nr^\tau\mu(Q^j)^{1-\frac{\tau}{D}}\le N^{\frac{\tau}{D}}r^{\tau}
   \mu(\bigcup_{j=1}^NQ^j))^{1-\frac{\tau}{D}}\,\,\Big(=N r^\tau
    \big(\frac{\mu(\bigcup_{j=1}^NQ^j)}{N}\big)^{1-\frac{\tau}{D}}\Big). 
\] 
\end{lemma}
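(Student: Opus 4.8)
The plan is to prove Lemma~\ref{lemma:worstcase} by a direct convexity argument. Set $t=1-\tfrac{\tau}{D}$, so that $0<t<1$ since $0<\tau<D$. The inequality we must establish, after dividing both sides by $r^\tau$, is
\[
\sum_{j=1}^N\mu(Q^j)^{t}\le N^{1-t}\Big(\sum_{j=1}^N\mu(Q^j)\Big)^{t},
\]
where I have used that the cubes $Q^j$ are disjoint, so $\mu(\bigcup_j Q^j)=\sum_j\mu(Q^j)$, and that $\tfrac{\tau}{D}=1-t$. This is exactly concavity of the function $s\mapsto s^{t}$ on $[0,\infty)$ applied to the $N$ values $\mu(Q^1),\dots,\mu(Q^N)$.

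The key step is Jensen's inequality: since $s\mapsto s^{t}$ is concave for $0<t<1$, we have
\[
\frac{1}{N}\sum_{j=1}^N\mu(Q^j)^{t}\le\Big(\frac{1}{N}\sum_{j=1}^N\mu(Q^j)\Big)^{t}.
\]
Multiplying through by $N$ and simplifying $N\cdot N^{-t}=N^{1-t}$ yields the displayed inequality above, hence the lemma. The parenthetical identity in the statement is just the trivial rewriting $N^{\tau/D}r^\tau X^{1-\tau/D}=Nr^\tau(X/N)^{1-\tau/D}$ with $X=\mu(\bigcup_j Q^j)$, which follows since $N=N^{\tau/D}N^{1-\tau/D}$.

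There is essentially no obstacle here; the only mild point to be careful about is the degenerate case where some $\mu(Q^j)=0$, but concavity of $s\mapsto s^t$ holds on all of $[0,\infty)$ (with the convention $0^t=0$), so Jensen applies verbatim, and the case $\mu(\bigcup_j Q^j)=0$ makes both sides vanish. One could alternatively invoke Hölder's inequality with exponents $1/t$ and $1/(1-t)$ applied to the vectors $(\mu(Q^j)^t)_j$ and $(1)_j$, which gives $\sum_j\mu(Q^j)^t\le(\sum_j\mu(Q^j))^t\cdot N^{1-t}$ directly; I would present whichever phrasing is shortest, most likely the one-line Jensen computation.
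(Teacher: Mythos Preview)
Your proof is correct and essentially identical to the paper's: the paper applies H\"older's inequality in one line to obtain $\sum_j \mu(Q^j)^{1-\tau/D}\le N^{\tau/D}\bigl(\sum_j \mu(Q^j)\bigr)^{1-\tau/D}$, which is exactly the computation you wrote (and you even mention H\"older as an alternative phrasing).
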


\begin{proof}
Since the cubes $Q^1,\ldots,Q^N$ are of the same size, the claim follows 
directly from H\"older's inequality: 
\begin{align*}
\sum_{j=1}^Nr^\tau\mu(Q^j)^{1-\frac{\tau}{D}} =
r^\tau\sum_{j=1}^N\mu(Q^j)^{1-\frac{\tau}{D}}\le
N^{\frac{\tau}{D}}r^\tau\Big(\sum_{j=1}^N\mu(Q^j)\Big)^{1-\frac{\tau}{D}}. 
\end{align*}
\end{proof}

The proof of the next lemma is based on straightforward geometric
arguments. The boundary of a set $A\subset\mathbb R^d$ is denoted by
$\partial A$.

\begin{lemma}\label{lemma:convexlike} Let $k\in\mathbb N$ and
let $Q\subset\mathbb R^d$ be a dyadic cube. If $B_1,B_2,\ldots B_n$ are closed
balls with radii at least $\sqrt{d}r_Q$, then 
$\partial(Q\setminus\bigcup_{i=1}^{n}B_i)$ may be
covered by $c2^{k(d-1)}$ dyadic cubes of side length $2^{-k}r_Q$. Here
$c=c(d)$ is a positive and finite constant depending only on $d$.  
\end{lemma}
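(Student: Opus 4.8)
The plan is to reduce the claim to a covering estimate for the boundary of the union $\bigcup_{i=1}^{n}B_i$ inside $\overline Q$, and then to use the hypothesis $\operatorname{diam}\overline Q=\sqrt d\,r_Q\le R_i$ (with $R_i$ the radius of $B_i$) to show that, after splitting into $2d$ pieces, this boundary is a graph of controlled oscillation per column, where $\delta:=2^{-k}r_Q$. Since $\partial\bigl(Q\setminus\bigcup_i B_i\bigr)\subset\partial Q\cup\bigl(\overline Q\cap\partial(\bigcup_i B_i)\bigr)$ and $\partial Q$ is covered by $2d\cdot2^{k(d-1)}$ dyadic cubes of side $\delta$, it suffices to cover $\Gamma:=\overline Q\cap\partial(\bigcup_i B_i)$ by a $d$-dependent constant times $2^{k(d-1)}$ such cubes. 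Also, when $k$ is bounded in terms of $d$ the claim is trivial: $\overline Q$ is the union of $2^{kd}=2^{k}\cdot2^{k(d-1)}$ dyadic cubes of side $\delta$, so the bounded factor $2^{k}$ can be absorbed into $c=c(d)$. Hence from now on $k$ may be assumed larger than a threshold $k_0(d)$ to be fixed.

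I would then describe $\Gamma$. If $v\in\Gamma$, then $v$ lies in some $B_i$ but in the interior of no $B_l$ (otherwise $v$ would be interior to the union), so $v\in\partial B_i$ with outer unit normal $\nu=(v-z_i)/R_i$, $z_i$ being the centre. Some coordinate of $\nu$ has modulus $\ge1/\sqrt d$; splitting according to which one and its sign gives $\Gamma=\bigcup_{j=1}^{d}(\Gamma_j^{+}\cup\Gamma_j^{-})$, where, say, $\Gamma_d^{+}$ consists of those $v$ admitting a supporting ball $B'=B(z',R')$ with $v\in\partial B'$ and $(v-z')_d\ge R'/\sqrt d$. Writing $\pi\colon\mathbb R^d\to e_d^{\perp}$ for the orthogonal projection and $z'_d$ for the $d$-th coordinate of $z'$, I would establish two facts. \emph{(a) Graph property:} if $v,v'\in\Gamma_d^{+}$ and $\pi(v)=\pi(v')$, then $v=v'$. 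For this, assume $v_d<v'_d$ and use the supporting ball $B'$ of $v'$: the set $B'\cap\pi^{-1}(\pi(v'))$ is the segment with endpoints $z'_d\pm s$, where $s:=(v'-z')_d\ge R'/\sqrt d\ge r_Q$, and $v'_d=z'_d+s$ is its upper endpoint; since $v,v'\in\overline Q$ forces $|v_d-v'_d|\le r_Q\le s$, the number $v_d$ lies strictly between $z'_d-s$ and $z'_d+s$, i.e.\ $v\in\operatorname{int}(B')$, contradicting $v\in\Gamma$. \emph{(b) Oscillation bound:} if $v'\in\Gamma_d^{+}$, $v\in\Gamma$ and $|\pi(v)-\pi(v')|\le h\le\sqrt{d-1}\,\delta$, then $v_d\ge v'_d-C(d)\,\delta$. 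Here (with $s:=(v'-z')_d$, $v'_d=z'_d+s$ as before) one compares the half-lengths $s$ and $s'$ of the segments cut by $B'$ on the parallel lines $\pi^{-1}(\pi(v'))$ and $\pi^{-1}(\pi(v))$: using $s\ge R'/\sqrt d$, $|\pi(v')-\pi(z')|=\sqrt{{R'}^{2}-s^{2}}\le R'\sqrt{(d-1)/d}$, and $R'\ge\sqrt d\,r_Q\ge\sqrt d\,\delta$, one gets $|s-s'|\le C(d)\,\delta$ (and, choosing $k_0(d)$ large enough, $s'>0$, i.e.\ the line $\pi^{-1}(\pi(v))$ really meets $B'$); then, since $v\notin\operatorname{int}(B')$ while $v_d\ge v'_d-r_Q\ge z'_d$ lies above the lower endpoint $z'_d-s'$ of its segment, $v_d$ must be at least the upper endpoint $z'_d+s'\ge(z'_d+s)-|s-s'|=v'_d-|s-s'|$.

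To conclude, partition $e_d^{\perp}$ into dyadic $(d-1)$-cubes of side $\delta$; only $2^{k(d-1)}$ of these meet $\pi(\overline Q)$. Fix such a base cube $q'$: any two points of $\Gamma_d^{+}$ projecting into $q'$ lie at distance $\le\sqrt{d-1}\,\delta$ in $e_d^{\perp}$, so by (b), applied with the higher one in the role of $v'$, their $d$-th coordinates differ by at most $C(d)\,\delta$; hence $\Gamma_d^{+}\cap(q'\times\mathbb R)$ meets at most $C(d)+2$ of the dyadic $\delta$-cubes sitting over $q'$. Summing over the $2^{k(d-1)}$ relevant base cubes, $\Gamma_d^{+}$ is covered by at most $(C(d)+2)\,2^{k(d-1)}$ dyadic cubes of side $\delta$; doing the same for the $2d$ pieces $\Gamma_j^{\pm}$, adding the $2d\cdot2^{k(d-1)}$ cubes for $\partial Q$, and absorbing the bounded-$k$ case, yields the assertion with $c=c(d)$. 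The step I expect to be the real obstacle is precisely this gain of independence from $n$: since $n$ is arbitrary one cannot afford to cover the spheres $\partial B_i\cap Q$ one by one, and it is the passage to the \emph{boundary of the union}, together with the large-radius hypothesis --- which forces that boundary to split into $2d$ Lipschitz-graph-like pieces of bounded oscillation per $\delta$-column --- that removes the dependence on $n$.
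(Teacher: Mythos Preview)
Your proof is correct. Both you and the paper reduce to showing that $\Gamma=\overline Q\cap\partial(\bigcup_i B_i)$ decomposes into $2d$ pieces, each of which behaves like a Lipschitz graph over a hyperplane, but the decompositions and the verifications differ. The paper partitions the \emph{balls} according to which face $F_j$ of $Q$ is closest to the centre $x_i$, then argues that for every $B_i$ in the group attached to $F_j$ the tangent plane of $\partial B_i$ at any point of $\Gamma\cap B_i$ makes an angle with the normal of $F_j$ that is bounded below by a constant $\gamma=\gamma(d)$; this yields a bi-Lipschitz projection of $\bigcup_{B_i\in A_j}B_i\cap\Gamma$ onto $F_j$ and hence the covering bound. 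You instead partition the \emph{points} of $\Gamma$ according to which coordinate of the outward normal at a supporting sphere is large, and replace the angle/bi-Lipschitz reasoning by the explicit two-step computation (a)--(b) bounding the vertical oscillation over each $\delta$-column. Your route is more self-contained---the paper's bi-Lipschitz claim for the full union $\bigcup_{B_i\in A_j}B_i\cap\Gamma$ (as opposed to a single sphere) is stated rather tersely---while the paper's route is shorter and more geometric. Either way the crux is exactly the point you emphasize at the end: the large-radius hypothesis forces the boundary of the union, in each of $2d$ coordinate directions, to be a single graph, which is what removes any dependence on $n$.
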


\begin{proof} Let $B_i=B(x_i,r_i)$ and 
$\Gamma=\partial(Q\setminus\bigcup_{i=1}^{n}B_i)$. We may assume that 
$B_i\cap Q\ne\emptyset$ and $x_i\not\in Q$ since $r_i\ge\sqrt dr_Q$. Denote the 
faces of $Q$ by $F_1,\dots,F_{2d}$, and divide the balls $B_i$ into $2d$ 
disjoint sets $A_j$ such that $B_i\in A_j$ provided $F_j$ is the closest face 
to $x_i$. Here the distance is measured from the centre of a face. (If there 
are several faces which are equally close choose one of those.) Fix 
$j\in\{1,\dots,2d\}$ and consider the part of $\Gamma$ determined by $A_j$.  
Since $r_i\ge\sqrt dr_Q$ we have that $F_j\subset B_i$ if $x_i$ is sufficiently
close to $F_j$. Hence, there is a constant $\gamma>0$ such that the angle 
between the normal of $F_j$ and the tangent plane of $B_i$ is larger than 
$\gamma$ at any point in $\Gamma\cap B_i$ for all $B_i\in A_j$. The minimum 
point for this angle is obtained in the 
following manner: Consider a vertex $v$ of $Q$ contained in $F_j$. Denote by
$v'$ the vertex of $F_j$ which is opposite to $v$, that is, the 
line segment determined  by $v$ and $v'$ is a diagonal of $F_j$. Let $L$ be 
the line determined by the diagonal of $Q$ containing $v$. Move along $L$ 
away from $Q$ up to the point $y$ where the distance to $v'$ equals 
$\sqrt d r_Q$.  Now $y$ is a minimum point which determines $\gamma$. We 
conclude that there is a bi-Lipschitz 
injection from $\cup_{B_i\in A_j}B_i\cap\Gamma$ to $F_j$ such that the Lipschitz
constants depend only on $\gamma$. This gives the claim.
\end{proof}

Before stating the rest of the auxiliary results, we introduce the notation 
we need throughout the remaining part of this section.

Consider the smallest integer $l$ such that $4\sqrt{d}\leq2^l$. For 
$\tfrac{15}{32}<\alpha<\tfrac12$, let $k=k(\alpha)$ be the unique positive 
integer for which 
\begin{equation}\label{eq:kdef}
\sqrt{d}2^{-k-1}\leq (1-2\alpha)2^l<\sqrt{d} 2^{-k}.
\end{equation}
For all $i\in\mathbb N$, the collection of all 
half-open $2^k$-adic cubes of side-length $2^{-ki}$ is denoted by 
$\mathcal{Q}^i$. Moreover, if $Q\in\mathcal{Q}^{i}$ and 
$Q'\in\mathcal{Q}^{i+n}$, we use the notation $Q'\prec_n Q$ provided that 
$Q'\subset Q$. This is simplified to $Q'\prec Q$
in the case $n=1$.

Let $\mu$ be a Radon measure on $\mathbb R^d$. Given $\varepsilon>0$, we call 
a cube $Q\in\mathcal{Q}^{i+1}$ porous provided that
\[
\por(\mu,x,2^{-ki+l},\varepsilon)\ge\alpha
\] 
for some $x\in Q$. For all $Q\in\mathcal{Q}^{i}$, set
\[
Q_{por}=\bigcup\{Q'\prec Q\,:\,Q'\text{ is porous}\}.
\]

Finally, for $x\in\mathbb{R}^d$ let 
$Q^{i}_{x}\in\mathcal{Q}^i$ be the unique cube containing $x$. If $c>0$,
$cQ$ is the cube obtained from a cube $Q$ by magnifying by the factor $c$ 
with respect to the centre of $Q$.

One of the fundamental and most useful structural properties of porous sets 
is the following: if the porosity of $A\subset\mathbb{R}^d$ is close to 
$\tfrac12$, then locally inside each ball with radius $r$ the set $A$ is
contained in a small neighbourhood of some $(d-1)$-dimensional surface
with $\mathcal H^{d-1}$-measure comparable to $r^{d-1}$. For more precise
statements of this type, see e.g. \cite{S} or \cite{JJKS}. 
In the following lemma which is a slight improvement of 
\cite[Lemma 2.2]{JJ1} we translate this fact into the language of mean
porous measures. It states that every cube $Q$ may be divided into three 
parts $Q=E\cup P\cup J$, where $E$ has small measure, $P$ is a small 
neighbourhood of some $(d-1)$-dimensional surface and $J$ contains no porous
points.

\begin{lemma}\label{lemma:boundary}
Let $\mu$ be a Radon measure on $\mathbb{R}^d$, $i\in\mathbb N$ and 
$\varepsilon>0$. Then any cube
$Q\in\mathcal{Q}^i$ may be divided into three parts 
\[
Q=E\cup P\cup J,
\] 
where $\mu(E)\leq N\varepsilon \mu((1+2^{l+1})Q)$, $J\cap Q_{por}=\emptyset$,
and $P$ can be covered by at most $c2^{k(d-1)}$ cubes $Q'\prec Q$.
Here $N=N(\alpha,d)$ and $c=c(d)$ are positive and finite constants.
\end{lemma}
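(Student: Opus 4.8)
The plan is to decompose the cube $Q\in\mathcal Q^i$ according to which of its children $Q'\prec Q$ are porous and how the witnessing empty balls sit inside $Q$. Fix $\varepsilon>0$. For each porous child $Q'$ there is a point $x\in Q'$ with $\por(\mu,x,2^{-ki+l},\varepsilon)\ge\alpha$, hence a ball $B(z,\beta 2^{-ki+l})$ with $\beta\ge\alpha$ inside $B(x,2^{-ki+l})$ satisfying $\mu(B(z,\beta 2^{-ki+l}))\le\varepsilon\mu(B(x,2^{-ki+l}))$. Note $B(x,2^{-ki+l})\subset (1+2^{l+1})Q$ since $2^{-ki+l}$ is comparable to $2^l r_Q$ and $x\in Q$; this is where the magnification factor $1+2^{l+1}$ comes from. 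By the choice of $l$ and $k$ in \eqref{eq:kdef}, the radius $\beta 2^{-ki+l}\ge\alpha 2^{-ki+l}$ is at least $\sqrt d\,2^{-k}\cdot 2^{-ki}=\sqrt d\,r_{Q'}$, so each such ball $B$ is a legitimate input to Lemma \ref{lemma:convexlike} relative to any child cube.

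Next I would collect all these balls $B_1,\dots,B_n$ (one per porous child, or all of them — it does not matter) and set $J=Q\setminus\bigcup_i B_i$ — no, more precisely let $U=Q\cap\bigcup_i \mathrm{int}(B_i)$ be the union of the open balls meeting $Q$, put $P$ equal to the union of those children $Q'\prec Q$ that meet the boundary $\partial(Q\setminus\bigcup_i B_i)$, let $E$ be the union of the balls $B_i$ intersected with $Q$ (the ``holes''), and let $J$ be the rest of $Q$. Then $J$ consists of children entirely outside every $B_i$; I must check $J\cap Q_{por}=\emptyset$, i.e.\ every porous child is swallowed by $E\cup P$. A porous child $Q'$ carries its own empty ball $B$; either $Q'\subset B$, in which case $Q'\subset E$, or $Q'$ meets $\partial B$ and hence meets $\partial(Q\setminus\bigcup B_i)$ unless $Q'$ is also covered by some other ball — either way $Q'\subset E\cup P$. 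The covering bound on $P$ is exactly Lemma \ref{lemma:convexlike} applied with $r_Q$ the side-length of $Q$ and the children playing the role of the dyadic cubes of side $2^{-k}r_Q$: $\partial(Q\setminus\bigcup_i B_i)$ is covered by $c2^{k(d-1)}$ such cubes, so $P$ consists of at most $c2^{k(d-1)}$ children, giving the stated geometric conclusion.

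For the measure estimate on $E$, each hole ball $B=B(z,\beta 2^{-ki+l})$ satisfies $\mu(B)\le\varepsilon\mu(B(x,2^{-ki+l}))\le\varepsilon\mu((1+2^{l+1})Q)$, and there are at most $N=N(\alpha,d)$ distinct such balls needed to cover $\bigcup_i B_i\cap Q$: indeed all the balls have radius comparable to $2^l r_Q$ and all meet $Q$ which has diameter $\sqrt d\,r_Q$, so a fixed number of them (depending only on the ratio $2^l\asymp(1-2\alpha)^{-1}$ up to a bounded factor — but since $\tfrac{15}{32}<\alpha<\tfrac12$ this ratio is bounded, so $N$ depends only on $\alpha$ and $d$) already covers the union by a volume/Besicovitch-type argument. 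Hence $\mu(E)\le\mu(\bigcup_i B_i\cap Q)\le N\varepsilon\mu((1+2^{l+1})Q)$.

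The main obstacle is bookkeeping the geometry cleanly: one must verify (i) that the witnessing empty balls genuinely have radius $\ge\sqrt d\,r_{Q'}$ so that Lemma \ref{lemma:convexlike} applies — this is precisely the role of the definition \eqref{eq:kdef} of $k$ and the constraint $\alpha>\tfrac{15}{32}$, together with the scale shift from $2^{-ki}$ to $2^{-ki+l}$; (ii) that every porous child is caught by $E\cup P$, which requires noting that a child meeting the \emph{interior} of some $B_i$ but not covered by it must meet $\partial(Q\setminus\bigcup B_i)$ or the boundary of another ball, all of which the covering in Lemma \ref{lemma:convexlike} accounts for; and (iii) that only boundedly many balls contribute to $E$, for which one uses that $2^l$ is a fixed constant (not tending to infinity) under the standing hypothesis on $\alpha$, so the number of balls of radius $\asymp 2^l r_Q$ needed to cover a neighbourhood of $Q$ is bounded in terms of $d$ and $l$, hence in terms of $d$ and $\alpha$. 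Once these three points are pinned down the decomposition $Q=E\cup P\cup J$ with the three claimed properties follows immediately, improving \cite[Lemma 2.2]{JJ1} by keeping track of the $(d-1)$-dimensional structure quantitatively through $k(\alpha)$.
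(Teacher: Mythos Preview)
Your decomposition is essentially the paper's, but step (ii) has a genuine gap. You assume that for a porous child $Q'$ its witnessing ball $B=B_{Q'}$ must either contain $Q'$ or meet $Q'$, giving the dichotomy ``$Q'\subset E$ or $Q'$ meets $\partial(Q\setminus\bigcup_i B_i)$''. This is false: the ball $B_{Q'}\subset B(x,2^{-ki+l})$ may lie entirely outside $Q'$. All one knows is that $\dist(x,B_{Q'})\le(1-2\alpha')2^{-ki+l}<\sqrt d\,r_{Q'}$, so $B_{Q'}$ is within roughly one child-diameter of $x\in Q'$, but this does \emph{not} force $B_{Q'}\cap Q'\neq\emptyset$. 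Hence a porous child can sit strictly outside $\bigcup_i B_i$ and strictly inside $Q$, landing in your $J$ and destroying $J\cap Q_{por}=\emptyset$.

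The paper's remedy is precisely to thicken your $P$: instead of taking the children that \emph{touch} $\partial(Q\setminus E)$, take the full neighbourhood
\[
P=\{y\in Q:\dist(y,\partial(Q\setminus E))<2^{-k(i+1)+l}\},
\]
which is about $2^l$ children wide. Then for $y\in Q_{por}\setminus E$ one gets $\dist(y,B_{Q'})\le|y-x|+(1-2\alpha')2^{-ki+l}<2\sqrt d\,2^{-k(i+1)}\le 2^{-k(i+1)+l}$, so $y\in P$. The thickened $P$ is still covered by $c(d)2^{k(d-1)}$ children because $l$ depends only on $d$; the extra factor $(1+2^{l+1})^d$ is absorbed into $c(d)$. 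Your radius check is also slightly off: for Lemma~\ref{lemma:convexlike} applied to the \emph{parent} $Q$ you need radii $\ge\sqrt d\,r_Q$, not $\sqrt d\,r_{Q'}$; this does hold since $\alpha 2^l>\sqrt d$ (from $2^l\ge 4\sqrt d$ and $\alpha>\tfrac{15}{32}$), but not via the computation you wrote.

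Finally, for the bound on $\mu(E)$ no Besicovitch argument is needed: there is exactly one ball per porous child and at most $2^{kd}$ children, so $N=2^{kd}$ works directly.
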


\begin{proof} For any $Q'\prec Q$ with $Q'\subset Q_{por}$, there is 
$x\in Q'$ such that
\[
\por(\mu,x,2^{-ki+l},\varepsilon)\ge\alpha.
\] 
Hence, given $0<\alpha'<\alpha$ with $(1-2\alpha')2^l<\sqrt d2^{-k}$, we 
find a ball $B_{Q'}$ of radius 
$\alpha'2^{-ki+l}$ such that
\begin{equation}\label{eq:Bx}
B_{Q'}\subset B(x,2^{-ki+l})\subset (1+2^{l+1})Q\,\,\text{ and }\,\,
\mu(B_{Q'})\leq\varepsilon\mu((1+2^{l+1})Q).
\end{equation}

Denoting by $\dist(a,A)$ the distance from a point $a\in\mathbb R^d$ to a set 
$A\subset\mathbb R^d$, define
\begin{align*}
 E&=\bigcup_{\substack {Q'\prec Q,\\ Q'\subset Q_{por}}}B_{Q'}\cap Q,\\
 P&=\{x\in Q\,:\,\dist\big(x,\partial(Q\setminus E)\big)<2^{-k(i+1)+l}\},
  \text{ and}\\
 J&=Q\setminus(E\cup P).
\end{align*}
It is evident from \eqref{eq:Bx} that 
\[
\mu(E)\leq 2^{kd}\varepsilon\mu((1+2^{l+1})Q)=N(\alpha,d)\varepsilon
   \mu((1+2^{l+1})Q).
\]  
On the other hand, since $\alpha'>\frac 7{16}$, we have that 
$\alpha'2^{-ki+l}>\sqrt d r_Q$, and so
it follows easily from Lemma \ref{lemma:convexlike} that 
$P$ may be covered by at most $(1+2^{l+1})^dc'2^{k(d-1)}=c(d)2^{k(d-1)}$ cubes 
$Q'\prec Q$ where $c'=c'(d)$ is the constant of Lemma \ref{lemma:convexlike}. 

It remains to show that $J\cap Q_{por}=\emptyset$, that is,
\begin{equation}\label{eq:subset}
Q_{por}\subset E\cup P. 
\end{equation}
Consider $y\in Q_{por}\setminus E$ and choose $Q'\prec Q$ with 
$y\in Q'\subset Q_{por}$. Let $x\in Q'$ and $B_{Q'}$ be as in \eqref{eq:Bx}. 
By the choice of $\alpha'$ we have
\[
\dist(y,B_{Q'})\leq|y-x|+(1-2\alpha')2^{-ki+l}<2\sqrt{d}2^{-k(i+1)}
   \leq 2^{-k(i+1)+l}
\]
which gives $\dist(y,\partial(Q\setminus E))<2^{-k(i+1)+l}$, and therefore
$y\in P$. This completes the proof of \eqref{eq:subset}.
\end{proof}

Next lemma is in the core of the proof of Theorem \ref{theorem:pdimofporo}. 
It shows that summing over porous
subcubes of a cube $Q$ gives a small factor which is decreasing 
exponentially under iteration. This is needed when proving that the assumptions
of Lemma \ref{lemma:dimensionlemma} are valid.

For the purpose of formulating our key lemma, we define weights
$\beta(Q)$ for cubes $Q\in\mathcal{Q}^i$ as follows: Suppose that $\mu$ is
a Radon measure on $\mathbb R^d$ and $0<D<d$. Set
\begin{equation}\label{defC}
C=C(d)=\max\{c,2d2^l\}
\end{equation}
where $c=c(d)$ is as in Lemma \ref{lemma:boundary}. If
$Q\in\mathcal{Q}^i$ for some $i\in\mathbb{N}$, define
\begin{equation}\label{weights}
\beta(Q)=\frac 13\begin{cases}
       C^{-\frac 12}2^{-\frac k2(d-1-D)}&\text{ if $Q$ is porous}\\
       2^{-\frac k2(d-D)}&\text{ otherwise}.
     \end{cases}
\end{equation}
For all $n\in\mathbb N$ with $n\ge2$, let 
$\varepsilon_0=\varepsilon_0(d,\alpha,D,n)>0$ be the unique real number 
satisfying
\begin{equation}\label{eq:R1}
R(\varepsilon_0)=
\frac{5}{18}C^{-\frac 12}2^{\frac k2}, 
\end{equation}
where for all $\varepsilon>0$
\begin{equation}\label{eq:R2}
\begin{split}
&R(\varepsilon)=R(\varepsilon,n)=\\
&\frac{n-1}{3}(\varepsilon N)^{\frac 12}C^{-\frac 12} 2^{\frac k2}
2^{k(n-1)(d-\frac D2)}\max\{1,\frac13 C^{-\frac 12}
2^{-\frac k2(d-1-D)} \}^{n-1} 
\end{split}
\end{equation}
and $N=N(\alpha,d)$ is as in Lemma \ref{lemma:boundary}.

Before formulating our lemma we make one more remark: From now on, we assume
that $\tfrac{15}{32}<\alpha<\tfrac12$ is so close to $\tfrac12$ that
\begin{equation}\label{eq:k}
k=k(\alpha)\geq\frac{\log C}{\log 2}.
\end{equation}
Thus $C^{-\frac 12}2^{-\frac k2(d-1-D)}\geq 2^{-\frac k2(d-D)}$,
giving 
\begin{equation}\label{trivialestimate}
\beta(Q)\leq \tfrac13 C^{-\frac 12}2^{-\frac k2(d-1-D)}
\end{equation}
for all $Q$. This fact will be used repeatedly in the proof of Lemma 
\ref{lemma:uniformestimate}.

For any $Q\in\mathcal{Q}^i$ and $j=0,1,2,\ldots,i$, we denote by $Q_{j}$
the unique cube in $\mathcal{Q}^{j}$ for which $Q\subset Q_{j}$. Clearly, 
$Q_i\subset Q_{i-1}\subset\dots\subset Q_1\subset Q_0$ and $Q_i=Q$.

\begin{lemma}\label{lemma:uniformestimate}
Let $\mu$ be a Radon measure on $\mathbb{R}^d$, $0<D<d$ and
$n\in\mathbb{N}$ with $n\ge2$. Then for all 
$i\in\mathbb N$ and $Q\in\mathcal Q_i$ we have
\begin{equation}\label{claim1}
\sum_{Q' \prec_n Q}\bigl(\prod_{j=1}^{n}\beta(Q'_{i+j})\bigr)
   r_{Q'}^{\frac D2}\mu(Q')^{\frac 12} 
\leq C^{-\frac 12}2^{\frac k2} r_{Q}^{\frac D2}\mu(Q)^{\frac 12}.
\end{equation} 
Moreover, if $\mathcal{Q}$ is any finite collection of disjoint $2^{kn}$-adic 
cubes and $\mu$ is a Radon measure on 
$\mathbb R^d$ such that $\spt\mu\subset\mathopen[0,1\mathclose]^d$, then
\begin{equation}\label{claim2}
\sum_{Q\in\mathcal{Q}}\bigl(\prod_{j=0}^{N_Q-1}C^{\frac 12}2^{-\frac k2}
  \prod_{i=jn+1}^{(j+1)n}\beta(Q_i)\bigr)r_{Q}^{\frac D2}\mu(Q)^{\frac 12}
  \leq r_{\mathopen[0,1\mathclose]^d}^{\frac D2}\mu(\mathbb{R}^d)^{\frac 12}, 
\end{equation}
where $N_Q\in\mathbb N$ such that $r_Q=2^{-knN_Q}$.
\end{lemma}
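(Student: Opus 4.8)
The plan is to reduce both inequalities to a single ``one generation'' estimate and then iterate. Fix $Q\in\mathcal Q^i$ and apply Lemma~\ref{lemma:boundary} to $Q$, writing its children as the disjoint union of three families: the non-porous children (at most $2^{kd}$ of them), the porous children needed to cover $P$ (at most $c\,2^{k(d-1)}$ of them, by Lemma~\ref{lemma:boundary}), and the remaining porous children, which all lie in $E$, so that the sum of their measures is at most $\mu(E)\le N\varepsilon\,\mu((1+2^{l+1})Q)$, while also $\mu(E)\le\mu(Q)$ since $E\subseteq Q$. On each family I would apply H\"older's inequality in the form of Lemma~\ref{lemma:worstcase}, use $r_{Q'}=2^{-k}r_Q$ and the explicit values of $\beta$ from \eqref{weights}, and keep track of the powers of $2^k$. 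The non-porous family and the second family each contribute at most $\tfrac13 r_Q^{D/2}\mu(Q)^{1/2}$ — the inequality $c\le C$ from \eqref{defC} is precisely what makes the second term fit into $\tfrac13$ — and the $E$-family contributes at most $\tfrac13 C^{-1/2}2^{k/2}r_Q^{D/2}\mu(E)^{1/2}$. This gives the one-generation bound
\[
\sum_{Q'\prec Q}\beta(Q')\,r_{Q'}^{D/2}\mu(Q')^{1/2}\le \tfrac23\,r_Q^{D/2}\mu(Q)^{1/2}+\tfrac13 C^{-1/2}2^{k/2}r_Q^{D/2}\mu(E(Q))^{1/2},
\]
with $E(Q)\subseteq Q$ and $\mu(E(Q))\le N\varepsilon\,\mu((1+2^{l+1})Q)$.

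To obtain \eqref{claim1} I would peel off generations one at a time, writing $\sum_{Q'\prec_n Q}(\prod_{j=1}^n\beta(Q'_{i+j}))r_{Q'}^{D/2}\mu(Q')^{1/2}=\sum_{Q_1\prec Q}\beta(Q_1)\sum_{Q'\prec_{n-1}Q_1}(\prod_{j=2}^n\beta(Q'_{i+j}))r_{Q'}^{D/2}\mu(Q')^{1/2}$ and inserting the one-generation bound repeatedly. The main part contracts by a factor $\tfrac23$ at each of the $n$ generations, contributing $(\tfrac23)^n r_Q^{D/2}\mu(Q)^{1/2}$, which for $n\ge2$ is at most $\tfrac49 C^{-1/2}2^{k/2}r_Q^{D/2}\mu(Q)^{1/2}$ because $C^{-1/2}2^{k/2}\ge1$ by \eqref{eq:k}. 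The error produced at generation $i+g$ is, up to a power of $\tfrac23$, a weighted sum over the generation-$(i+g)$ cubes $R\subseteq Q$ of $\tfrac13 C^{-1/2}2^{k/2}r_R^{D/2}\mu(E(R))^{1/2}$, the weight being the product of $\beta$'s along the path from $Q$ to $R$; I would bound each such weight by \eqref{trivialestimate}, apply H\"older across the cubes of a fixed generation, use that the sets $E(R)$ of a fixed generation are disjoint subsets of $Q$ whereas the enlarged cubes $(1+2^{l+1})R$ are finitely overlapping, and invoke the telescoping identity $\bigl(\tfrac13 C^{-1/2}2^{-\frac k2(d-1-D)}\bigr)2^{\frac k2(d-D)}=\tfrac13 C^{-1/2}2^{k/2}$ to collapse the powers of $2^k$. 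Summing the resulting geometric series in $g$ bounds the total error by $R(\varepsilon,n)\,r_Q^{D/2}\mu(Q)^{1/2}$ with $R(\varepsilon,n)$ as in \eqref{eq:R2}; choosing $\varepsilon=\varepsilon_0$ so that \eqref{eq:R1} holds makes this at most $\tfrac5{18}C^{-1/2}2^{k/2}r_Q^{D/2}\mu(Q)^{1/2}$, and since $\tfrac49+\tfrac5{18}<1$ we get \eqref{claim1}.

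Inequality \eqref{claim2} then follows by a stopping-time argument on the $2^{kn}$-adic tree rooted at $[0,1]^d$. For a $2^{kn}$-adic cube $R$ set $F(R)=\sum_{Q\in\mathcal Q,\,Q\subseteq R}w_R(Q)\,r_Q^{D/2}\mu(Q)^{1/2}$, where $w_R(Q)$ is the path weight from $R$ down to $Q$ (one factor $C^{1/2}2^{-k/2}\prod\beta$ for each $2^{kn}$-adic generation in between, as in the left side of \eqref{claim2}). Since $\mathcal Q$ is a finite antichain, I would prove $F(R)\le r_R^{D/2}\mu(R)^{1/2}$ by induction on the largest depth of a $\mathcal Q$-cube below $R$: if $R\in\mathcal Q$ this is an equality, and otherwise $F(R)=\sum_{R'\prec_n R}C^{1/2}2^{-k/2}(\prod_{j=1}^n\beta(R'_{\cdot+j}))F(R')$, which by the inductive hypothesis and then \eqref{claim1} is at most $C^{1/2}2^{-k/2}\cdot C^{-1/2}2^{k/2}r_R^{D/2}\mu(R)^{1/2}=r_R^{D/2}\mu(R)^{1/2}$. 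Applying this with $R=[0,1]^d$ and using $\spt\mu\subseteq[0,1]^d$ (so $\mu([0,1]^d)=\mu(\mathbb R^d)$, and $\mathcal Q$-cubes outside $[0,1]^d$ may be dropped) gives \eqref{claim2}. The one genuinely delicate step — and what I expect to be the \emph{main obstacle} — is the error bookkeeping in the iteration for \eqref{claim1}: one must carry along the mildly enlarged cubes produced by Lemma~\ref{lemma:boundary}, control them via their bounded overlap (together with the elementary bound $\mu(E(R))\le\mu(R)$ where needed), verify that the powers of $2^k$ really do telescope, and then tune $\varepsilon_0$ through \eqref{eq:R1}--\eqref{eq:R2} so that the accumulated geometric series leaves just enough room to close the estimate with the constant $C^{-1/2}2^{k/2}$.
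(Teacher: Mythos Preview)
Your one-generation estimate is correct, and your derivation of \eqref{claim2} from \eqref{claim1} via a stopping-time induction on the $2^{kn}$-adic tree is exactly what the paper does (it says ``\eqref{claim2} is a direct consequence of repeated applications of \eqref{claim1}''). The genuine gap is in the iteration that is supposed to give \eqref{claim1}.

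When you apply your one-generation bound to a cube $R\prec_g Q$, the error term carries $\mu(E(R))^{1/2}$ with $\mu(E(R))\le N\varepsilon\,\mu\bigl((1+2^{l+1})R\bigr)$. Your plan is to use bounded overlap of the enlarged cubes to bound $\sum_R\mu((1+2^{l+1})R)$; but the union $\bigcup_{R\prec_g Q}(1+2^{l+1})R$ is \emph{not} contained in $Q$ (for $R$ near $\partial Q$ the enlargement sticks out, and at $g=0$ it is the full $(1+2^{l+1})Q$). So bounded overlap only gives a bound in terms of $\mu$ of an enlargement of $Q$, not $\mu(Q)$. Since $\varepsilon_0$ in \eqref{eq:R1}--\eqref{eq:R2} is fixed independently of $\mu$ and $Q$, and $\mu((1+2^{l+1})Q)/\mu(Q)$ can be arbitrarily large, you cannot absorb this into $R(\varepsilon_0,n)\,r_Q^{D/2}\mu(Q)^{1/2}$. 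Falling back on $\mu(E(R))\le\mu(R)$ kills the $\varepsilon$ and the resulting ``error'' at level $g$ is of order $(\tfrac13 C^{-1/2}2^{k/2})^{g+1}r_Q^{D/2}\mu(Q)^{1/2}$, which does not sum since $C^{-1/2}2^{k/2}\ge1$.

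This is precisely the obstacle the paper singles out at the start of its proof, and its remedy is structural, not just bookkeeping: split the \emph{first} generation of children of $Q$ into an interior family $\mathcal Q_I$ (those with $\dist(Q'',\partial Q)>2^{l}r_{Q''}$) and a boundary family $\mathcal Q_B$ (at most $C\,2^{k(d-1)}$ cubes). For $Q'''\in\mathcal Q_I$ every descendant $Q''\prec_{m-1}Q'''$ satisfies $(1+2^{l+1})Q''\subset Q$, so Lemma~\ref{lemma:boundary} applied to $Q''$ produces errors controlled by $\mu(Q)$ and your peeling works there. On $\mathcal Q_B$ the paper does \emph{not} use Lemma~\ref{lemma:boundary} at all; it invokes the induction hypothesis (claim \eqref{claim1} for $n-1$), and the small cardinality $C\,2^{k(d-1)}$ together with Lemma~\ref{lemma:worstcase} yields the $\tfrac13 C^{-1/2}2^{k/2}$ contribution. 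This interior/boundary dichotomy is why the proof is by induction on $n$, and it is the missing ingredient in your scheme.
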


\begin{proof}
The proof is based on Lemma \ref{lemma:boundary}. The problematic part in 
Lemma \ref{lemma:boundary} is the factor $\mu((1+2^{l+1})Q')$. If $Q'$ is close
to the boundary of $Q$ the expanded cube $(1+2^{l+1})Q'$ will not be a subset
of $Q$, and therefore we are unable to estimate the sum in terms of 
$\mu$-measure of $Q$. This
problem is overcome by dividing the subcubes of $Q$ into two parts depending
on there distance to the boundary of $Q$. This leads to the use of $n$: the
larger $n$ we take, the better estimates we have. For $n=1$ we cannot utilize
porosity at all.
 
Claim \eqref{claim2} is a direct consequence of repeated applications of 
\eqref{claim1}. In what follows
we will prove \eqref{claim1} by induction. Divide the cube 
$Q\in\mathcal{Q}^i$ into two regions of subcubes, the boundary region
$\mathcal{Q}_B\subset\mathcal{Q}^{i+1}$ and the interior one 
$\mathcal{Q}_I\subset\mathcal{Q}^{i+1}$, as follows
\[
\mathcal{Q}_B = \{Q' \prec Q~:~\dist(Q', \partial Q) \le 2^{-(i+1)k+l}\}
\] 
and
\[
\mathcal{Q}_I = \{Q' \prec Q~:~\dist(Q', \partial Q) > 2^{-(i+1)k+l}\}.
\] 

For $n=2$, we estimate the sum in these subcubes in the following manner: 
Let $R(\varepsilon)=R(\varepsilon,2)$ and $\varepsilon_0$ be as in 
\eqref{eq:R1} and \eqref{eq:R2}. Take
$Q''\in\mathcal{Q}_I$. Defining $Q''_{por}$ in terms of $\varepsilon_0$, we 
obtain from definition \eqref{weights} and Lemma \ref{lemma:worstcase} that
\begin{equation}\label{eq:nonporoestimate}
\begin{split}
&\sum_{\substack{Q' \prec Q''\\Q'\cap Q''_{por}=\emptyset}}
   \beta(Q')r_{Q'}^{\frac D2}\mu(Q')^{\frac 12}\\
&\le\frac 13 2^{-\frac k2(d-D)} 2^{\frac{kd}{2}}
   2^{-\frac k2(i+2)D}\mu(Q'')^{\frac 12}\\ 
&=\frac13 2^{-\frac k2(i+1)D}\mu(Q'')^{\frac 12}.
\end{split}
\end{equation}
To estimate the sum over $Q'\subset Q''_{por}$, we apply Lemma 
\ref{lemma:boundary} to $Q''$. Note that the part $J$ is now absent, and 
therefore  the sum 
is divided into two parts determined by $E$ and $P$. Using Lemma 
\ref{lemma:worstcase} in both parts and the fact that 
$(1+2^{l+1})Q''\subset Q$, we have recalling \eqref{defC} 
\begin{equation}\label{eq:poroestimate}
\begin{split}
&\sum_{\substack{Q' \prec Q''\\ Q'\subset Q''_{por}}}
  \beta(Q')r_{Q'}^{\frac D2}\mu(Q')^{\frac 12} \\
&\le\sum_{\substack{Q' \prec Q''\\ Q'\subset Q''_{por},\,Q'\cap E\ne\emptyset}}
  \beta(Q')r_{Q'}^{\frac D2}\mu(Q')^{\frac 12} +
  \sum_{\substack{Q' \prec Q''\\ Q'\subset Q''_{por},\,Q'\cap P\ne\emptyset}}
  \beta(Q')r_{Q'}^{\frac D2}\mu(Q')^{\frac 12}\\
&\leq \frac 13 C^{-\frac 12}2^{-\frac k2(d-1-D)}\Big( 2^{\frac{kd}2}
  2^{-\frac k2(i+2)D}\big(N\varepsilon_0\mu((1+2^{l+1})Q'')\big)^{\frac 12}\\  
&+\big(c2^{k(d-1)}\big)^{\frac 12}2^{-\frac k2(i+2)D}\mu(Q'')^{\frac 12}\Big)\\
&\le \frac 13 C^{-\frac 12}2^{-\frac k2(d-1-D)}\Big(2^{\frac k2(d-D)}
  2^{-\frac k2(i+1)D}(N\varepsilon_0\mu(Q))^{\frac 12}\\
&+C^{\frac 12}2^{\frac k2(d-1-D)}2^{-\frac k2(i+1)D}\mu(Q'')^{\frac 12}\Big)\\
&\le\frac 13(\varepsilon_0 N)^{\frac 12}C^{-\frac 12}2^{\frac k2} 
  2^{-\frac k2(i+1)D}\mu(Q)^{\frac 12}+\frac 13 2^{-\frac k2(i+1)D}
  \mu(Q'')^{\frac 12}. 
\end{split}
\end{equation}

Combining \eqref{eq:nonporoestimate} with \eqref{eq:poroestimate}, we 
continue using Lemma \ref{lemma:worstcase} and \eqref{trivialestimate}
\begin{equation*}
\begin{split}
&\sum_{Q''\in\mathcal{Q}_I}\sum_{Q'\prec Q''}\beta(Q'')\beta(Q')
  r_{Q'}^{\frac D2}\mu(Q')^{\frac 12}\\
&\le\frac 13 C^{-\frac 12}2^{-\frac k2(d-1-D)}\sum_{Q''\in\mathcal{Q}_I}
  \Big(\sum_{\substack{Q' \prec Q''\\Q'\cap Q''_{por}=\emptyset}}
  \beta(Q')r_{Q'}^{\frac D2}\mu(Q')^{\frac 12}\\
&\phantom{eeeeeeeeeeeeeeeeeeeeeeeee}
  +\sum_{\substack{Q' \prec Q''\\Q'\subset Q''_{por}}}
  \beta(Q')r_{Q'}^{\frac D2}\mu(Q')^{\frac 12}\Big)\\
&\le\frac 13 C^{-\frac 12}2^{-\frac k2(d-1-D)}\sum_{Q''\in\mathcal{Q}_I}
  \Big(\frac13(\varepsilon_0 N)^{\frac 12}C^{-\frac 12}2^{\frac k2} 
  2^{-\frac k2(i+1)D}\mu(Q)^{\frac 12}\\
&\phantom{eeeeeeeeeeeeeeeeeeeeeeee}+\frac23 2^{-\frac k2(i+1)D}
  \mu(Q'')^{\frac 12}\Big)\\
&\le R(\varepsilon_0)r_{Q}^{\frac D2}\mu(Q)^{\frac 12}+\frac 29 C^{-\frac 12}
  2^{-\frac k2(d-1-D)}\sum_{Q''\in\mathcal{Q}_I}2^{-\frac k2(i+1)D}
  \mu(Q'')^{\frac 12}\\
&\leq R(\varepsilon_0)r_{Q}^{\frac D2}\mu(Q)^{\frac 12}+\frac 29 C^{-\frac 12}
  2^{-\frac k2(d-1-D)}2^{\frac{kd}2}2^{-\frac k2(i+1)D}\mu(Q)^{\frac 12}\\ 
&\leq\big(R(\varepsilon_0)+\frac 29 C^{-\frac 12}2^{\frac k2}\big)
  r_{Q}^{\frac D2}\mu(Q)^{\frac 12}\\
&=\frac12 C^{-\frac 12}2^{\frac k2}r_{Q}^{\frac D2}\mu(Q)^{\frac 12}.    
\end{split}
\end{equation*}

Recalling that $C\geq 2d2^l$, it is evident that 
$\#\mathcal{Q}_B\le C2^{k(d-1)}$. From \eqref{trivialestimate} and
Lemma \ref{lemma:worstcase} we obtain
\begin{align*}
&\sum_{Q''\in\mathcal{Q}_B}\sum_{Q'\prec Q''} 
 \beta(Q')\beta(Q'')r_{Q'}^{\frac D2}\mu(Q')^{\frac 12}\\
&\le\frac 19 C^{-1}2^{-k(d-1-D)}
 \big(C2^{2k(d-\frac 12)}\big)^{\frac 12}2^{-\frac k2(i+2)D}\mu(Q)^{\frac 12}\\
&=\frac 19 C^{-\frac 12}2^{\frac k2}r_{Q}^{\frac D2}\mu(Q)^{\frac 12}.
\end{align*}
Putting together the above estimates, proves \eqref{claim1} when $n=2$.

Next we assume that \eqref{claim1} holds when $n=m$ and verify it in
the case $n=m+1$. Again, we divide $Q$ into two regions of subcubes
$\mathcal{Q}_B, \mathcal{Q}_I \subset \mathcal{Q}^{i+1}$ defined above. 
Lemma \ref{lemma:boundary} is used to evaluate the sum in the
interior region $\mathcal{Q}_I$ whereas the induction hypothesis is applied
in the boundary region $\mathcal{Q}_B$. Let 
$R(\varepsilon)=R(\varepsilon,m+1)$ and $\varepsilon_0$ be as in \eqref{eq:R1} 
and \eqref{eq:R2}. 

In $\mathcal{Q}_I$ we begin with
\begin{align*}
&\sum_{ Q''' \in\mathcal{Q}_I}\sum_{Q' \prec_m Q'''}\bigl(\prod_{j=1}^{m+1}
 \beta(Q'_{i+j})\bigr)r_{Q'}^{\frac D2}\mu(Q')^{\frac 12}\\ 
&=\sum_{Q''' \in\mathcal{Q}_I}\sum_{Q''\prec_{m-1} Q'''}\prod_{j=1}^{m}
 \beta(Q''_{i+j})\sum_{Q'\prec Q''}\beta(Q')r_{Q'}^{\frac D2}
 \mu(Q')^{\frac 12}.  
\end{align*} 
From \eqref{eq:nonporoestimate} and \eqref{eq:poroestimate} we get
for the inner sum 
\begin{align*}
&\sum_{Q'\prec Q''}\beta(Q')r_{Q'}^{\frac D2}\mu(Q')^{\frac 12}\\
&\leq\frac 13 (\varepsilon_0 N)^{\frac 12}C^{-\frac 12}2^{\frac k2} 
  2^{-\frac{kmD}2}r_{Q}^{\frac D2}\mu(Q)^{\frac 12}+ 
  \frac 23 r_{Q''}^{\frac D2}\mu(Q'')^{\frac 12}.
\end{align*}
Repeating this $m$ times and using \eqref{eq:R2}, \eqref{trivialestimate} and 
Lemma \ref{lemma:worstcase} yields
\begin{equation}\label{eq:interior}
\begin{split} 
&\sum_{Q''' \in\mathcal{Q}_I}\sum_{Q'\prec_m Q'''}\bigl(\prod_{j=1}^{m+1}
  \beta(Q'_{i+j})\bigr)r_{Q'}^{\frac D2}\mu(Q')^{\frac 12}\\ 
&\leq R(\varepsilon_0)r_{Q}^{\frac D2}\mu(Q)^{\frac 12}+\big(\frac 23\big)^m
  \sum_{Q'''\in\mathcal{Q}_I}\beta(Q''')r_{Q'''}^{\frac D2}
  \mu(Q''')^{\frac 12}\\
&\leq R(\varepsilon_0)r_{Q}^{\frac D2}\mu(Q)^{\frac 12}\\
&+\frac 13\big(\frac 23\big)^m C^{-\frac 12}2^{-\frac k2(d-1-D)}
  2^{\frac{kd}2}2^{-\frac k2(i+1)D}\mu(Q)^{\frac 12}\\  
&\leq\big(R(\varepsilon_0)+\frac 13\big(\frac 23\big)^{m}
  C^{-\frac 12}2^{\frac k2}\big)r_{Q}^{\frac D2}\mu(Q)^{\frac 12}\\   
&\leq\frac 12 C^{-\frac 12}2^{\frac k2}r_{Q}^{\frac D2}\mu(Q)^{\frac 12}.    
\end{split}
\end{equation}
Notice that $R(\varepsilon_0,i)<R(\varepsilon_0,m+1)$ for all $i=2,\ldots,m$ by
\eqref{eq:R2}.

Finally, the induction hypothesis, Lemma \ref{lemma:worstcase} and 
\eqref{trivialestimate} combine to give in the boundary region $\mathcal{Q}_B$
(recall \eqref{defC})
\begin{equation}\label{eq:boundary}
\begin{split}
&\sum_{ Q'' \in\mathcal{Q}_B}\sum_{Q' \prec_m Q''}\bigl(\prod_{j=1}^{m+1}
  \beta(Q'_{i+j})\bigr)r_{Q'}^{\frac D2}\mu(Q')^{\frac 12}\\  
&\le C^{-\frac 12}2^{\frac k2}\sum_{Q'' \in\mathcal{Q}_B}\beta(Q'')
   r_{Q''}^{\frac D2}\mu(Q'')^{\frac 12}\\ 
&\leq\frac 13 C^{-1}2^{\frac k2}2^{-\frac k2(d-1-D)}
   \big(C2^{k(d-1)}\big)^{\frac 12}2^{-\frac k2(i+1)D}\mu(Q)^{\frac 12}\\ 
&=\frac 13 C^{-\frac 12}2^{\frac k2}r_{Q}^{\frac D2}\mu(Q)^{\frac 12}. 
\end{split}
\end{equation}
The claim follows by summing up \eqref{eq:interior} and
\eqref{eq:boundary}. 
\end{proof}

In the previous lemma $p$ played no role. Our next result quantifies the fact 
that we really gain something if a large proportion of scales is porous.

\begin{lemma}\label{lemma:prod}
Let $0<p<1$ and 
\[
D>d-p+\frac{\log (9C)}{k\log 2 }.
\]
Suppose that $\mu$ is a Radon measure on $\mathbb R^d$. Then there are 
$n\in\mathbb N$ and $K>1$ such that
\[
\prod_{j=0}^{L-1}C^{\frac 12}2^{-\frac k2}\prod_{i=jn+1}^{(j+1)n}
  \beta(Q_x^i)\ge K^L
\] 
for all $L\in\mathbb N$ and $x\in\mathbb R^d$ with 
\[
\#\{0\le j\le nL-1\,:\,\por(\mu,x,2^{-kj+l},\varepsilon)\ge\alpha\}\ge pnL
\]
for some $\varepsilon>0$. Here $n$ and $K$ are independent of $x$ and $L$.
\end{lemma}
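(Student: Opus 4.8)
The plan is to reduce the statement to a single exponent computation. First I would observe that the product telescopes: since the index blocks $\{jn+1,\dots,(j+1)n\}$ for $j=0,\dots,L-1$ partition $\{1,\dots,nL\}$,
\[
\prod_{j=0}^{L-1}C^{\frac12}2^{-\frac k2}\prod_{i=jn+1}^{(j+1)n}\beta(Q_x^i)=\bigl(C^{\frac12}2^{-\frac k2}\bigr)^{L}\prod_{i=1}^{nL}\beta(Q_x^i),
\]
so it suffices to bound $\prod_{i=1}^{nL}\beta(Q_x^i)$ from below. The combinatorial input is that if $\por(\mu,x,2^{-kj+l},\varepsilon)\ge\alpha$ for some $j\in\{0,\dots,nL-1\}$, then the cube $Q_x^{j+1}\in\mathcal Q^{j+1}$ is porous: just take the point $x$ itself in the definition of a porous cube. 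Hence at least $pnL$ of the cubes $Q_x^1,\dots,Q_x^{nL}$ are porous, and at most $(1-p)nL$ of them are non-porous.

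Next I would use \eqref{weights} together with \eqref{trivialestimate} (which rests on \eqref{eq:k} and says exactly that the porous value $\beta_P:=\tfrac13C^{-\frac12}2^{-\frac k2(d-1-D)}$ of $\beta$ dominates the non-porous value $\beta_N:=\tfrac13 2^{-\frac k2(d-D)}$). Letting $b\le(1-p)nL$ be the number of non-porous cubes among $Q_x^1,\dots,Q_x^{nL}$, the ratio $\beta_N/\beta_P\le1$ gives by monotonicity in $b$ that
\[
\prod_{i=1}^{nL}\beta(Q_x^i)=\beta_P^{\,nL-b}\beta_N^{\,b}\ge\beta_P^{\,pnL}\beta_N^{\,(1-p)nL}.
\]
A direct computation then yields $\beta_P^{\,pn}\beta_N^{\,(1-p)n}=3^{-n}C^{-pn/2}2^{-\frac{kn}{2}(d-D-p)}$; the only point to get right here is that the exponent of $2$ collapses, $p(d-1-D)+(1-p)(d-D)=(d-D)-p$.

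Finally I would feed in the hypothesis on $D$. Put $\delta:=D-\bigl(d-p+\tfrac{\log(9C)}{k\log2}\bigr)>0$, so that $d-D-p=-\tfrac{\log(9C)}{k\log2}-\delta$ and hence $2^{-\frac{kn}{2}(d-D-p)}=(9C)^{n/2}2^{\frac{kn\delta}{2}}$. Substituting,
\[
\beta_P^{\,pn}\beta_N^{\,(1-p)n}=3^{-n}C^{-pn/2}(9C)^{n/2}2^{\frac{kn\delta}{2}}=C^{\frac{n(1-p)}{2}}2^{\frac{kn\delta}{2}},
\]
and combining with the telescoping identity and the lower bound for $\prod\beta$,
\[
\prod_{j=0}^{L-1}C^{\frac12}2^{-\frac k2}\prod_{i=jn+1}^{(j+1)n}\beta(Q_x^i)\ge\Bigl(C^{\frac12}2^{-\frac k2}\,C^{\frac{n(1-p)}{2}}2^{\frac{kn\delta}{2}}\Bigr)^{L}=\Bigl(C^{\frac{1+n(1-p)}{2}}2^{\frac k2(n\delta-1)}\Bigr)^{L}.
\]
Since $C=C(d)\ge 2d2^l>1$ and $\delta>0$, any integer $n\ge2$ with $n>1/\delta$ makes $K:=C^{\frac{1+n(1-p)}{2}}2^{\frac k2(n\delta-1)}$ strictly larger than $1$, and both $n$ and $K$ depend only on $d,\alpha,D,p$, not on $x$ or $L$, which is the assertion. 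I do not expect any genuine obstacle here: the entire argument rests on the telescoping observation, the ``at least $pnL$ porous cubes'' fact, and the two displayed computations, the care being concentrated in the exponent of $2$ and in choosing $n$ large enough to absorb the loss coming from the factor $C^{\frac12}2^{-\frac k2}$.
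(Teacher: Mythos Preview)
Your proof is correct and follows the same strategy as the paper: telescope the double product, observe that the porosity hypothesis forces at least $pnL$ of the cubes $Q_x^1,\dots,Q_x^{nL}$ to be porous, lower-bound $\prod\beta$ by $\beta_P^{pnL}\beta_N^{(1-p)nL}$ via \eqref{trivialestimate}, and then choose $n$ large enough. The only difference is bookkeeping: the paper sets $D_0=d-p+\tfrac{\log(9C)}{k\log 2}$ and $\delta=(\tfrac1{D_0}-\tfrac1{D})(d-p)$, pulls a $D/2$-power outside, and arrives at the base $(C\,2^{-k}\,2^{k\delta Dn})^{1/2}$ with the condition $2^{k\delta Dn}>C^{-1}2^k$; your choice $\delta=D-D_0$ keeps the computation linear in the exponents and gives the equivalent base $C^{(1+n(1-p))/2}2^{k(n\delta-1)/2}$, which is arguably cleaner.
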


\begin{proof}
Let 
\[
D_0=d-p+\frac{\log (9C)}{k \log 2 }\text{ and }\,
\delta=(\frac 1{D_0}-\frac 1D)(d-p)>0. 
\]
Choose $n\in\mathbb N$ such that
\begin{equation}\label{eq:n}
2^{k\delta Dn}>C^{-1}2^{k}.
\end{equation}
Obviously, $Q_{x}^j$ is porous whenever
$\por(\mu,x,2^{-k(j-1)+l},\varepsilon)\ge\alpha$, and thus 
\[
\#\{1\leq j\leq nL\,:\,Q_{x}^{j}\text{ is porous}\}\ge pnL.
\] 
By \eqref{weights} we have 
\begin{align*}
&\prod_{j=0}^{L-1}C^{\frac 12}2^{-\frac k2}
  \prod_{i=jn+1}^{(j+1)n}\beta(Q_x^i)\\
&=\big(C^{\frac 12}2^{-\frac k2}\big)^L\prod_{i=1}^{Ln}\beta(Q_x^i)\\
&\geq\big(C^{\frac 12}2^{-\frac k2}\big)^L\big(\frac 13\big)^{Ln}
  \big(C^{-\frac 12}2^{-\frac k2(d-1-D)}\big)^{pLn}
  \big(2^{-\frac k2(d-D)}\big)^{(1-p)Ln}\\
&=\Big(\big(C^{\frac 1D}2^{-\frac kD}\big)^L\big(3^{-\frac 2D}
  C^{-\frac pD}2^{-k(\frac{d-p}D-1)}\big)^{Ln}\Big)^{\frac D2}\\
&\ge\Big(\big(C^{\frac 1D}2^{-\frac{k}{D}}\big)^L2^{k\delta Ln}
  \big((9C)^{-\frac 1{D_0}}2^{-k(\frac{d-p}{D_0}-1)}\big)^{Ln}
  \Big)^{\frac D2}\\
&=\big(C2^{-k}2^{k\delta Dn}\big)^{\frac L2}       
\end{align*}
which gives the claim by \eqref{eq:n}.
\end{proof}

\begin{remark}\label{scalechange}
Mean porosity is defined in terms of scales $2^{-i}$ where $i\in\mathbb{N}$. 
However, in the proof of Theorem
\ref{theorem:pdimofporo} we need to use scales of the form $2^{-ki+m}$ where
$k,m\in\mathbb{N}$. This problem may be overcome by the following observation: 
Let $\varepsilon>0$ and $x\in\mathbb R^n$ such that 
$\#\{1\leq j\leq Nk\,:\,\por(\mu,x,2^{-j},\varepsilon)\geq \alpha\}\geq pNk$.
Then there is an integer $t$ with $0\le t\le k-1$ such that 
\[
\#\{1\leq j\leq N\,:\,\por(\mu,x,2^{-kj+t},\varepsilon)\geq \alpha\}\geq pN.
\]
Observe that the starting scale plays no role in Lemmas 
\ref{lemma:uniformestimate} and \ref{lemma:prod} and we choose it (out of
$k$ possibilities) depending on the point.
\end{remark}

\begin{proof}[Proof of Theorem \ref{theorem:pdimofporo}]
Suppose that $\mu$ is mean $(\alpha,p)$-porous for 
$\tfrac{15}{32}<\alpha<\tfrac12$ for which \eqref{eq:k} holds
and for $0\leq p\leq 1$. It is clearly enough to prove the claim for such 
$\alpha$'s. We shall prove that 
\begin{equation}\label{mainclaim}
\dimp\mu\le d-p+\frac{\log(9C)}{k\log 2}
\end{equation}
where $C=C(d)$ and $k=k(\alpha)$ are as in Lemma \ref{lemma:uniformestimate}. 
This implies the claim since, by \eqref{eq:kdef}, 
\[
\frac{\log(9C)}{k\log2}\leq\frac{C'(d)}{\log(\frac1{1-2\alpha})}.
\]
We may assume without loss of generality that $\mu(\mathbb{R}^d)=1$ and 
$\spt\mu\subset [0,1]^d$.

Let $0<p'<p$, $D>d-p'+\tfrac{\log(9C)}{k\log 2}$ and $K$ and $n$ be as in 
Lemma \ref{lemma:prod}. Take $\varepsilon_0$ as in 
\eqref{eq:R1} with $R(\varepsilon)=R(\varepsilon,n)$.
Since $\mu$ is mean $(\alpha,p)$-porous there are a Borel set 
$B\subset\mathbb R^d$ and constants $\eta>0$, $0<\varepsilon<\varepsilon_0$ 
and $I_0\in\mathbb N$ such that $\mu(B)>\eta$ and
\[
\#\{1\leq j\leq I\,:\,\por(\mu,x,2^{-j},\varepsilon)\ge\alpha\}>p'I
\] 
for all $I\ge I_0$ and for all $x\in B$. Let $N_0$ be so large that 
\begin{equation}\label{eq:betaestimate}
K^{N_0}>\frac{4k}\eta\text{ and }N_0kn\ge I_0.
\end{equation}

Let $\mathcal{Q}$ be any finite collection of $2^{kn}$-adic cubes with 
$r_Q=2^{-N_Qkn}\le 2^{-N_0 kn}$ for all $Q\in\mathcal{Q}$. For
those $Q\in\mathcal{Q}$ for which $Q\cap B\neq\emptyset$, we define
$\tau(Q)=\frac D2$. By Remark \ref{scalechange} there are 
$t=t(Q)\in\{0,\dots,k-1\}$ and $x\in Q$ such that 
\[
\#\{1\leq j\leq N_Qn\,:\,\por(\mu,x,2^{-kj+t+l},\varepsilon)\ge\alpha\}
\ge p'N_Qn
\]
provided that $Q\cap B\neq\emptyset$. (Note that $l$ is added here because it 
appears in the assumptions of Lemma \ref{lemma:prod} via the way we defined
porous cubes in the chapter after \eqref{eq:kdef}.)
For each such $Q\in\mathcal Q$ 
fix $Q'$ with $Q\subset Q'$ and 
$r_{Q'}=2^{-N_Qkn+t}$. In this way we obtain $k$ collections $\mathcal Q_m$ 
of $2^{kn}$-adic cubes scaled by the factor $2^m$, $m=0,\dots, k-1$.  
Since $N_Q\ge N_0$ we have by Lemma \ref{lemma:prod} and 
\eqref{eq:betaestimate} 
\[
r_{Q}^{\tau(Q)}\mu(Q)^{1-\frac{\tau(Q)}{D}}<\frac{\eta}{4k}\prod_{j=0}^{N_Q-1}
   C^{\frac 12}2^{-\frac k2}\prod_{i=jn+1}^{(j+1)n}\beta(Q_i)\,
   r_{Q}^{\frac D2}\mu(Q)^{\frac 12}
\]
for each $Q\in\mathcal Q_m$ where $\beta(Q_i)$ is as in \eqref{weights}. 
Summing over all $Q$'s and using \eqref{claim2}, we get for all 
$m=0,\dots, k-1$  
\begin{equation}\label{eq:B}
\begin{split}
&\sum_{\substack{Q\in\mathcal Q_m\\ Q\cap B\neq\emptyset}}
  r_{Q}^{\tau(Q)}\mu(Q)^{1-\frac{\tau(Q)}{D}}\\
&<\frac{\eta}{4k}\sum_{\substack{Q\in\mathcal Q_m\\ Q\cap B\neq\emptyset}}
  \prod_{j=0}^{N_Q-1}C^{\frac 12}2^{-\frac k2}\prod_{i=jn+1}^{(j+1)n}
  \beta(Q_i)\,r_{Q}^{\frac D2}\mu(Q)^{\frac 12}\\
&\leq\frac{\eta}{4k}r_{[0,1]^d}^{\frac D2}\mu(\mathbb{R}^d)^{\frac 12}
  =\frac{\eta}{4k}.
\end{split}
\end{equation}
Since 
$r_Q^{\frac D2}\mu(Q)^{\frac 12}\le r_{Q'}^{\frac D2}\mu(Q')^{\frac 12}$,
inequality \eqref{eq:B} implies that
\begin{equation}\label{eq:C}
\sum_{\substack{Q\in\mathcal Q\\Q\cap B\neq\emptyset}}
     r_{Q}^{\tau(Q)}\mu(Q)^{1-\frac{\tau(Q)}{D}}
\le\sum_{m=0}^{k-1}\sum_{\substack{Q\in\mathcal Q_m\\ Q\cap B\neq\emptyset}}
   r_{Q}^{\tau(Q)}\mu(Q)^{1-\frac{\tau(Q)}{D}}
\le k\frac\eta{4k}=\frac\eta 4.
\end{equation}

Suppose that $Q\in\mathcal{Q}$ with $Q\cap B=\emptyset$ and choose
$\tau(Q)>0$ so small that 
$r_{Q}^{\tau(Q)}\mu(Q)^{1-\frac{\tau(Q)}{D}}\leq\frac{1-\frac\eta 2}{1-\eta}
  \mu(Q)$. 
This choice is possible since
$r_{Q}^{\tau}\mu(Q)^{1-\frac{\tau}{D}}\rightarrow \mu(Q)$ as 
$\tau\downarrow 0$. Now
\begin{equation}\label{eq:noB}
\begin{split}
\sum_{\substack{Q\in\mathcal{Q}\\Q\cap B=\emptyset}}
   r_{Q}^{\tau(Q)}\mu(Q)^{1-\frac{\tau(Q)}{D}}
  \leq\frac{1-\frac\eta 2}{1-\eta}\mu(\mathbb{R}^d\setminus 
  B)< 1-\frac \eta 2.
\end{split}
\end{equation}
Combining \eqref{eq:C} and \eqref{eq:noB} gives
\[\sum_{Q\in\mathcal{Q}}r_{Q}^{\tau(Q)}\mu(Q)^{1-\frac{\tau(Q)}{D}}<1-\frac\eta 4
<\mu(\mathbb{R}^d).\]
Since the same upper bound is valid also for infinite collections of cubes 
Lemma \ref{lemma:dimensionlemma} implies that $\dimp\mu\leq D$. Letting
$D\downarrow d-p+\tfrac{\log(9C)}{k\log 2}$ completes the proof of 
\eqref{mainclaim}. 
\end{proof}

The following example shows that we cannot get a better upper 
estimate on the dimension than
\[ 
d - p + p\frac{C}{\log(\frac{1}{1-2\alpha})}
\]
even for mean porous sets. For simplicity, we consider the case $d=1$.
We use the notation $\dimH$ for Hausdorff dimension. The idea of the example
is the following: First we take $mnl$ steps in the construction of a standard 
Cantor set where each interval is substituted by two subintervals of length 
$2^{-nk}$. This will give us $(mnl-1)nk$ scales where porosity is at least
$\frac 12-2^{-kn}$. Next we divide each construction interval into 
$2^{(nk-mn)nkl}$ subintervals and choose every other of them. This guarantees
that the dimension will be large enough but the cost of it is the term $-nk$
in the number of porous scales. To get rid of this we iterate this procedure
increasing $l$ at every step.

\begin{example}\label{asymptoticallybest}
Fix $p=\frac mk\in ]0,1[\cap\mathbb Q$. Define for all $n\in\mathbb N$ a 
Cantor-type mean-porous set  
\[
A_{p,n}=\bigcap_{j=1}^\infty\bigcup_{\substack{g_i\in\mathcal S_i\\ i = 1,\dots, j}}
        (g_1\circ\cdots\circ g_j)([0,1]),
\]
with
\[
\mathcal S_l=\{K_{j_1}\circ\cdots\circ K_{j_{mnl}}\circ f_i\, :\, 
   i\in\{0,\dots,2^{(nk-mn)nkl-1}-1\} ~,~ j_t\in\{1,2\}\, \forall\, t\},
\]
where
\begin{align*}
f_i & :\mathbb R\to\mathbb R : x\mapsto 2^{-(nk-mn)nkl}x + i2^{-(nk-mn)nkl+1},\\
K_1 & : \mathbb{R} \to \mathbb{R} : x \mapsto 2^{-nk}x \text{ and}\\
K_2 & : \mathbb{R} \to \mathbb{R} : x \mapsto 2^{-nk}x + 1-2^{-nk}.
\end{align*}
A standard calculation shows that
\[
\dimH A_{p,n}=1-\frac mk+\frac m{nk^2}.
\]
On scales $2^{-s}$, where
\[
s-\frac{l^2+l}2(nk)^2\in\{0,\dots,(mnl-1)nk-1\}\text{ for some }l\in\mathbb N
\]
the porosity at each point $x\in A_{p,n}$ is at least $\frac 12-2^{-kn}$. Hence,
$A_{p,n}$ is mean $(\frac 12-2^{-kn},p)$-porous. Finally, 
we notice that
\[ 
1-p+ p\frac{\tfrac 12\log 2}{\log(\frac 1{1-2\alpha})}=1-\frac mk
   +\frac m{2k(kn-1)}\le\dimH A_{p,n}.
\]
\end{example}

\section{Mean porous measures are not necessarily approximable by 
            mean porous sets}\label{counterexample}

The aim of this section is to clarify relations between mean porosities of
sets and those of measures by verifying that, contrary to 
\cite[Proposition 1]{BS}, mean porous measures cannot be approximated by 
mean porous sets. For simplicity, we restrict our consideration to 
$\mathbb R$. Our goal is to construct a mean porous measure $\mu$ on $[0,1]$ 
with the property that any mean porous set has zero $\mu$-measure. 

Intuitively, the measure $\mu$ is constructed in the following way: For all
positive integers $i$, we define slowly decaying weights $w(i)$ and start with
the uniform measure on the unit interval. On $i^{\text{th}}$ step of 
the construction we 
redistribute the measure on the dyadic intervals of length $2^{-i}$ by 
attaching weight $w(i)$ to one half of the dyadic interval of length 
$2^{-(i-1)}$ and $1-w(i)$ to
the other half. We alternate the half of the interval which gets most of the
measure. To be precise, define for all $0<p\leq 1$
\begin{align*}
  \beta_p(\mu)=\sup\{\alpha\ge0\,:\,&\mu(A)>0\text{ for some mean }\\
    &(\alpha,p)\text{-porous set } A\subset\mathbb R\}
\end{align*}
and proceed by modifying the construction of \cite[Example 4]{EJJ}: 
For positive integers $i$, set $w(i)=\tfrac 1{\log(i+2)}$ and
\begin{equation*}
s(i)=\begin{cases}
       w(i)&\text{ if $i$ is odd}\\
       1-w(i)&\text{ if $i$ is even}.
     \end{cases}
\end{equation*}

For all binary sequences $j_1\ldots j_i$, we denote by $I_{j_1\ldots j_i}$
the closed dyadic interval of length $2^{-i}$ whose left endpoint
in binary representation is $0,j_1j_2\ldots j_i$.
Let $\mu$ be the unique Radon probability measure on $[0,1]$ determined by 
the formula
\[
\mu(I_{j_1\ldots j_i})=\prod\limits_{k=1}^{i}(1-s(k))^{j_k}s(k)^{1-j_k}
\]
for all binary sequences $j_1\ldots j_i$. We change continuously the side of 
the larger weight since we want to avoid the technical difficulties caused by 
the fact that at some scales the small ball giving the porosity might be 
outside the dyadic interval containing the point we are considering.    

\begin{theorem}\label{thm}
The measure $\mu$ has the following properties:
\begin{enumerate}
\item $\beta_p(\mu)=0$ for all $0<p\le1$,\label{1}
\item $\mu$ is $\tfrac18$-porous and \label{2}
\item $\mu$ is mean $(\alpha,1)$-porous for all $0\le\alpha<\tfrac12$.\label{3}
\end{enumerate}
\end{theorem}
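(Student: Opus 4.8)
The plan is to prove the three assertions about $\mu$ roughly in the order $(2)\Rightarrow(3)$ first, since these are both lower bounds on porosity of the measure and follow from the same computation, and then to prove $(1)$, which is the genuinely new phenomenon and the main obstacle.

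For parts $(2)$ and $(3)$, I would first record the elementary structure of $\mu$: for a dyadic interval $I=I_{j_1\dots j_i}$ of length $2^{-i}$, the two dyadic children have $\mu$-masses in the ratio $s(i+1):1-s(i+1)$, and since $s(k)=w(k)=\tfrac1{\log(k+2)}$ or $1-w(k)$, the smaller child has mass $\le w(i+1)\mu(I)$, which tends to $0$. So at \emph{every} scale there is a dyadic subinterval of half the length carrying at most $w(i+1)\mu(I)$ of the mass. To extract porosity one must locate, inside a ball $B(x,r)$, a sub-ball of radius $\asymp\alpha r$ of small relative measure. Choosing $r\asymp 2^{-i}$ and using the dyadic child of $I^i_x$ with the small mass, one gets a gap of relative radius about $\tfrac14$ (this is where changing continuously the side carrying the large weight matters: it keeps the light child safely inside, away from the endpoints of $B(x,r)$, at a fixed positive proportion of scales, yielding $\tfrac18$ after the geometry is accounted for). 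Taking $\varepsilon\to0$ and using $w(i)\to0$ gives that the light child eventually has relative mass $<\varepsilon$ at all large $i$, so $\liminf_r\por(\mu,x,r,\varepsilon)\ge\tfrac12-o(1)$ as $\varepsilon\to0$; this simultaneously gives $(3)$ (porosity arbitrarily close to $\tfrac12$ at a proportion $p=1$ of scales, at every $x$) and, keeping $\varepsilon$ fixed, the weaker uniform bound $(2)$. I would carry out the geometric bookkeeping once, carefully, to justify both the constant $\tfrac18$ and the ``$\alpha<\tfrac12$'' in $(3)$.

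The heart of the paper is $(1)$: $\beta_p(\mu)=0$, i.e. every mean $(\alpha,p)$-porous set $A$ with $\alpha>0$ has $\mu(A)=0$. The idea is that a point $x$ at which $A$ is $\alpha$-porous for a positive proportion of scales forces, at that positive proportion of scales, a dyadic interval $I^i_x$ of $\mu$-mass that is \emph{substantially reduced} relative to its parent $I^{i-1}_x$ — reduced by a factor bounded away from $1$ depending only on $\alpha$ (because a genuine hole of relative size $\alpha$ inside $I^{i-1}_x$ must avoid a definite $\mu$-chunk, and by the structure of $\mu$ the only way to make a child light is to be the ``$w$-side'', which costs a factor $\le w(i)$). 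The catch is that $w(i)\to0$ only \emph{logarithmically}, so this factor is not uniformly bounded away from $1$ — it is $\le w(c i)=\tfrac1{\log(ci+2)}$, which does go to $0$. Summing $-\log$ of these factors over the porous scales $j\le i$, of which there are at least $pi$ (minus $o(i)$), the local dimension $\dimloc\mu(x)=\liminf_i \tfrac{\log\mu(I^i_x)}{-i\log 2}$ is pushed to $+\infty$: $\log\mu(I^i_x)\le -\sum_{\text{porous }j\le i}\log\tfrac1{w(cj)}\cdot(\text{const})$, and $\sum_{j\le i}\log\log j \asymp i\log\log i\gg i$. Hence $\dimloc\mu(x)=\infty$ at $\mu$-a.e.\ point of $A$; but $\dimloc\mu<\infty$ $\mu$-a.e.\ (indeed $\mu$ is a doubling-type measure on $[0,1]$ with mass ratios bounded below), so $\mu(A)=0$. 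Letting $\alpha\downarrow 0$ — note the argument needs $\alpha>0$ only to guarantee the reduction factor is $<1$ at all; any fixed $\alpha>0$ works — gives $\beta_p(\mu)=0$ for every $p$.

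The main obstacle is making precise the implication ``$A$ is $\alpha$-porous at $x$ for scale $j$'' $\Rightarrow$ ``$I^j_x$ (or a nearby dyadic interval) is the light $w$-child of its parent at scale $j$, or at least has $\mu$-mass $\le (1-\eta(\alpha))\mu(I^{j-1}_x)$'', uniformly in $j$ — i.e.\ converting the \emph{geometric} hole given by porosity of the set into a \emph{combinatorial} statement about which dyadic child $x$ sits in, and then about $\mu$-mass. One must handle the mismatch between arbitrary balls $B(y,\alpha 2^{-j})$ and dyadic intervals (a hole of relative size $\alpha$ need not align with the dyadic grid, so it may straddle two scales), and one must be careful that a single porous scale for $A$ might only reduce the mass at scale $j$ \emph{or} $j\pm O(1)$; this is absorbed by passing to blocks of a bounded number of scales, at the cost of replacing $p$ by $p'<p$ and $w(j)$ by $w(cj)$, which is harmless since $w(cj)\to0$ still. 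Once this combinatorial-to-measure dictionary is set up, the divergence $\sum\log\log j=\infty$ does the rest almost automatically, and the contradiction with finiteness of $\dimloc\mu$ closes the argument.
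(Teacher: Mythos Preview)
Your sketch for parts (2) and (3) is essentially the paper's argument (which in turn defers to \cite{EJJ}): the light dyadic child at level $i+1$ has relative mass $w(i+1)\to 0$, and the alternation of sides keeps that child safely inside $B(x,2^{-i})$. This is fine.

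Part (1), however, has a genuine gap. Your key implication is that if $A$ is $\alpha$-porous at $x$ for scale $j$, then $\mu(I^j_x)\le w(cj)\,\mu(I^{j-1}_x)$, i.e.\ $x$ sits in the \emph{light} child. This is false. A hole of $A$ inside $B(x,2^{-j})$ is a hole \emph{of the set $A$}; it says nothing about which dyadic child carries $x$, and it can perfectly well lie on the light side of $\mu$ while $x$ sits on the heavy side. In fact, by the strong law of large numbers (the paper's Lemma~\ref{lemma:sll}), for $\mu$-almost every $x$---hence for $\mu$-almost every $x\in A$ if $\mu(A)>0$---the digit $x_k$ is on the heavy side for a set of $k$'s of density $1$. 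So $\dimloc\mu(x)=0$, not $\infty$, at $\mu$-a.e.\ point of $A$, and your contradiction never materializes. (Incidentally, $\mu$ is not doubling: the ratio of the two children at level $k$ is $(1-w(k))/w(k)\to\infty$.)

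What porosity of $A$ at $x$ does buy is a \emph{lower} bound on the $\mu$-mass of the hole: any dyadic interval $D$ of length $2^{-(j+m)}$ has $\mu(D)\ge w(j+1)\cdots w(j+m)\,\mu(I^j_x)$, so the hole carries at least a $(1/\log j)^{m}$-fraction of the mass. Following a single point $x$ this only yields $\mu(I^{j+m}_x)\le\bigl(1-(1/\log j)^{m}\bigr)\mu(I^j_x)$, which is far too weak to force $\dimloc\mu(x)=\infty$ (and, as noted, cannot, since that quantity is $0$ $\mu$-a.e.). The paper's remedy is not to follow one point but to sum $\mu(Q)$ over \emph{all} $2^{l+m}$-adic intervals $Q$ meeting $E\subset A$: each time an ancestor $Q_j$ is porous with respect to $E$, the sum drops by the factor $\eta(Q_j)=1-w(\cdot)^{l+m}$, and since $\sum_j (1/\log j)^{l+m}=\infty$ the product $\prod_j\eta(Q_j)\to 0$, forcing $\mu(E)=0$. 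This collective estimate over intervals, rather than a local-dimension bound at individual points, is the missing idea.
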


\begin{remark}\label{irrelevant} For (1) we need to take slowly decreasing 
weights $w(i)$ whilst for (2) and (3) we only need that 
$w(i)\to 0$ as $i\to\infty$. 
Note that in (2) the value of porosity is not optimal. Our aim is only 
to demonstrate that even porous measures need not be approximable by 
mean porous sets.

\end{remark}
      
Theorem \ref{thm} will be proven as a consequence of several lemmas. 
The first one, Lemma \ref{lemma:sll}, is a corollary of the strong law of 
large numbers. For $x\in[0,1]$, let $x_i$ denote the $i$:th digit of $x$ 
in the binary representation. The non-uniqueness of the representation plays 
no role here since $\mu$ is easily seen to be non-atomic.

\begin{lemma}\label{lemma:sll}
For $\mu$-almost all $x\in[0,1]$, we have
\[
\lim\limits_{i\rightarrow\infty}\frac{\#\{1\leq j\leq i\,
:\, x_j=x_{j+1}\}}{i}=0.
\]
\end{lemma}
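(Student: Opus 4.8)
The plan is to deduce Lemma~\ref{lemma:sll} from the strong law of large numbers by recognizing the digits of a $\mu$-random point as an independent (though not identically distributed) sequence of Bernoulli random variables. Concretely, view $[0,1]$ with the measure $\mu$ as a probability space, and let $X_j(x)=x_j$ be the $j$:th binary digit. By the very definition of $\mu$ through the product formula $\mu(I_{j_1\ldots j_i})=\prod_{k=1}^i(1-s(k))^{j_k}s(k)^{1-j_k}$, the variables $X_1,X_2,\ldots$ are independent, with $X_j=1$ having probability $1-s(j)$ and $X_j=0$ having probability $s(j)$. Since $s(j)\to 1$ or $s(j)\to 0$ depending on parity but in any case $w(j)=\tfrac1{\log(j+2)}\to 0$, we have $\min\{s(j),1-s(j)\}=w(j)\to 0$.

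Next I would introduce the indicator variables $Y_j(x)=\mathbf 1_{\{x_j=x_{j+1}\}}$ and compute their means. Since $X_j$ and $X_{j+1}$ are independent, $\mathbb E[Y_j]=\mathbb P(X_j=X_{j+1})=(1-s(j))(1-s(j+1))+s(j)s(j+1)$. Because one of $s(j),s(j+1)$ is $w(\cdot)$ and the other is $1-w(\cdot)$ (consecutive indices have opposite parity), this equals $w(j)(1-w(j+1))+w(j+1)(1-w(j))\le w(j)+w(j+1)\to 0$. So the expected density of ``coincidences'' tends to $0$; the point is now to upgrade this to an almost sure statement about the empirical density $\frac1i\#\{1\le j\le i:x_j=x_{j+1}\}=\frac1i\sum_{j=1}^i Y_j$.

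The $Y_j$ are not independent (since $Y_j$ and $Y_{j+1}$ share the digit $X_{j+1}$), but $Y_j$ and $Y_{j+2}$ are, so splitting into even and odd indices gives two sequences of independent, uniformly bounded random variables, to each of which a Kolmogorov-type strong law for non-identically distributed summands applies: it suffices that $\sum_j \mathrm{Var}(Y_j)/j^2<\infty$, which holds trivially since $\mathrm{Var}(Y_j)\le 1$. Hence $\frac1i\sum_{j=1}^i(Y_j-\mathbb E[Y_j])\to 0$ $\mu$-almost surely. Combined with $\frac1i\sum_{j=1}^i\mathbb E[Y_j]\to 0$ (the Ces\`aro average of a null sequence), this yields $\frac1i\sum_{j=1}^i Y_j\to 0$ $\mu$-a.e., which is exactly the claim. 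I expect the only genuinely delicate point to be the bookkeeping around the non-uniqueness of the binary representation and the dependence between adjacent $Y_j$'s; the former is dispatched by the remark that $\mu$ is non-atomic (so the dyadic rationals form a $\mu$-null set), and the latter by the even/odd splitting just described.
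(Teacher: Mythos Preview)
Your argument is correct. Both your approach and the paper's rest on Kolmogorov's strong law of large numbers applied to the independent digit sequence, but the executions differ. You work directly with the coincidence indicators $Y_j=\mathbf{1}_{\{x_j=x_{j+1}\}}$, note that they are $1$-dependent, split into even and odd indices to obtain two genuinely independent sequences, and apply the strong law to each. The paper instead applies the strong law to the digits themselves after a parity flip: setting $X_i=x_i$ for $i$ odd and $X_i=1-x_i$ for $i$ even gives an independent sequence with $\mathbb P(X_i=1)=1-w(i)\to 1$, so $\mu$-almost surely the density of $j$ with $x_j=q(j)$ (where $q(j)=j\bmod 2$) equals~$1$; the lemma then follows since $x_j=x_{j+1}$ forces $x_j\ne q(j)$ or $x_{j+1}\ne q(j+1)$. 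The paper's route is marginally slicker in that it never confronts any dependence, and it actually proves the slightly stronger statement that the digit sequence agrees with the alternating pattern $1,0,1,0,\ldots$ on a set of density~$1$ (which the paper reuses later for part~(3) of Theorem~\ref{thm}). Your route is equally valid and more direct for the lemma as stated.
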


\begin{proof}
Applying the strong law of large numbers, see \cite[Chapter X.7]{Fe} to the
sequence of mutually independent random variables $X_i=x_i$, if $i$ is odd,
and $X_i=1-x_i$, if $i$ is even, gives
\[
\lim_{i\rightarrow\infty}\frac{\#\{1\leq j\leq i\,:\,x_j=q(j)\}}{i}=1
\]
for $\mu$-almost all $x\in[0,1]$. Here $q(j)=j\mod2$. 
From this the claim follows easily.
\end{proof}

For all positive integers $j$, we use the notation $\mathcal D^j$ for the
half-open dyadic subintervals of $[0,1]$ having length $2^{-j}$.(We do not use 
the notation $\mathcal Q^j$ introduced in Section \ref{mainsection} since we 
want to distinguish between powers of $2$ and $2^k$.) Letting $m$ be a positive
integer and $E\subset[0,1]$,
we say that $D\in\mathcal D^j$ contains an $m$-hole of $E$ if there is
$D'\in\mathcal{D}^{j+m}$ such that $D'\subset D$ and $D'\cap E=\emptyset$.

\begin{lemma}\label{lemma:ballsandqubes} 
Suppose that $E\subset[0,1]$ is 
mean $(\alpha,p)$-porous for some $\alpha>0$ and $0<p\le 1$. Let
$m$ be a positive integer such that $2^{-m}<\tfrac \alpha 4$. For $x\in[0,1]$, 
let $D_x^j\in\mathcal D^j$ be such that $\{x\}=\bigcap_{j=1}^{\infty}D_x^j$. 
Then for $\mu$-almost all $x\in[0,1]$, we have
\[
\liminf\limits_{i\rightarrow\infty}\frac{\#\{1\leq j\leq i
\,:\,D_x^j\text{ contains an $m$-hole of $E$}\}}{i}>\frac p8.\]    
\end{lemma}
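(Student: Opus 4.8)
The plan is to translate the mean $(\alpha,p)$-porosity of $E$ (a geometric statement about balls) into a statement about dyadic intervals $D_x^j$ containing $m$-holes, and then combine this with Lemma \ref{lemma:sll} to control the measure of the exceptional set. First I would fix a scale $j$ with $\por(E,x,2^{-j})\ge\alpha$ for a point $x\in[0,1]$: by definition there is an interval $B=B(y,\alpha 2^{-j})$ with $B\subset B(x,2^{-j})\setminus E$. This $B$ has length $2\alpha 2^{-j}$. Since $m$ was chosen so that $2^{-m}<\alpha/4$, the interval $B$ comfortably contains some dyadic interval $D'$ of length $2^{-(j+m+c)}$ for a small absolute constant $c$ (coming from the fact that an interval of length $\ell$ contains a dyadic interval of length roughly $\ell/4$). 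Moreover $B\subset B(x,2^{-j})$ forces $D'$ to lie within a bounded number of dyadic intervals of $\mathcal D^{j}$ around $D_x^{j}$, in fact within $D_x^{j-c'}$ for a small constant $c'$. Thus a porous scale $j$ for $E$ at $x$ produces an $m$-hole in $D_x^{j'}$ for some $j'$ in a window $[j-c',j+c']$ of bounded width depending only on $\alpha$ (through $m$) and absolute constants.

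Next I would turn this into a density statement. If $x\in E$ and $E$ is mean $(\alpha,p)$-porous at $x$, then the set of porous scales $\{1\le j\le i:\por(E,x,2^{-j})\ge\alpha\}$ has lower density at least $p$. Each such $j$ contributes an $m$-hole at some nearby scale $j'$, and since the shift $|j-j'|$ is bounded by a constant $w$, the map $j\mapsto j'$ is at-most-$(2w+1)$-to-one. Hence the set $\{1\le j\le i: D_x^j\text{ contains an }m\text{-hole of }E\}$ has lower density at least $p/(2w+1)$. Choosing the dyadic-covering constants carefully so that $2w+1\le 8$ yields lower density at least $p/8$; the strict inequality $>p/8$ claimed in the lemma should come from the same argument applied with a slightly weaker constant, or by noting the density is actually bounded below by $p/(2w+1)$ with $2w+1<8$ for the relevant range of $\alpha$. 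This part is purely combinatorial bookkeeping once the geometric ball-to-cube translation is pinned down.

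The remaining gap is that mean porosity of $E$ is only assumed \emph{pointwise on $E$}, but the conclusion is about $\mu$-almost every $x\in[0,1]$, and a priori $\mu(E)$ could be small or even zero. Here is where Lemma \ref{lemma:sll} enters. The point is that for $\mu$-almost every $x$, the binary digits of $x$ satisfy $x_j\ne x_{j+1}$ for a set of $j$ of density $1$. For such $x$, the dyadic interval $D_x^j$ has a specific structure: the neighbouring dyadic interval of the same length on one side is $D_x^{j-1}\setminus D_x^j$, and I want to argue that \emph{whatever mean porous set $E$ we are given}, the interval $D_x^j$ contains an $m$-hole of $E$ for a positive density of $j$ — \emph{unless} $x$ is well approximated by $E$ in a way that forces $x$ to lie in (a neighbourhood of) $E$. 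More precisely, I expect the right reading is: we do not assume $x\in E$; instead we observe that if $D_x^j$ contains \emph{no} $m$-hole of $E$ for most $j$ in a block, then $E$ is ``dense'' in all these dyadic intervals, and combined with the digit condition from Lemma \ref{lemma:sll} this will be used later (in a subsequent lemma) to bound $\mu(E)$. So in proving this lemma I would actually restrict attention to the full-measure set where Lemma \ref{lemma:sll} holds, fix any such $x$, and then — using that $x$ itself may or may not be in $E$ — apply the porosity of $E$ at the point of $E$ nearest to $x$ within $D_x^j$, transferring the hole to $D_x^j$ via the bounded-shift argument above, and absorbing the error coming from the $\mu$-negligible digit-repetition scales.

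The main obstacle will be exactly this last point: making the logic watertight that a \emph{positive density} of $m$-holes appears in $D_x^j$ for $\mu$-a.e.\ $x$, given that porosity of $E$ is only controlled at points of $E$. I expect this to be handled by noting that if $x\notin$ closure of $E$ then some $D_x^j$ is disjoint from $E$ entirely (trivially an $m$-hole at every finer scale, giving density $1$), so one may assume $x\in\overline E$; then at scales $j$ where $x$ is close to a genuine point $z\in E$ with $\por(E,z,2^{-j'})\ge\alpha$ for $j'\approx j$, the hole at $z$'s scale sits inside $B(z,2^{-j'})\subset B(x,2^{-j'+1})$, and one bounded-shift step more places it inside $D_x^{j}$. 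Summing the density contributions and subtracting the density-zero set of bad digit-scales from Lemma \ref{lemma:sll} leaves lower density strictly exceeding $p/8$, as claimed. The constants $c,c',w$ must be tracked just precisely enough to land below $8$; I would not grind through their exact values here but would verify at the end that the choice $2^{-m}<\alpha/4$ gives enough room.
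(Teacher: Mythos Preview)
Your plan matches the paper's in outline, but you have misplaced the role of Lemma~\ref{lemma:sll}, and this leaves a real gap in the ball-to-cube step. The claim in your first paragraph that the dyadic hole $D'\subset B(x,2^{-j})$ must lie in $D_x^{j-c'}$ for some fixed constant $c'$ is false: if the digits $x_{j-c'+1},\ldots,x_j$ are all $0$ (or all $1$), then $x$ sits within $2^{-j}$ of an endpoint of $D_x^{j-c'}$, so $B(x,2^{-j})$ protrudes and $D'$ may land in the neighbouring dyadic interval rather than in any $D_x^{j'}$. No absolute $c'$ works for all $x$ and all $j$; your bounded-shift map $j\mapsto j'$ is therefore not well defined. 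This is exactly why Lemma~\ref{lemma:sll} is invoked: for $\mu$-almost every $x$ the set $\{j:x_{j-1}=x_j\}$ has density zero, and a direct check shows that $x_{j-1}\ne x_j$ forces $\mathopen]x-2^{-j},x+2^{-j}\mathclose[\subset D_x^{j-2}$. The paper intersects the porous scales (count eventually $>pi/2$) with the scales where $x_{j-1}\ne x_j$ (complement eventually of count $<pi/4$), obtaining more than $pi/8$ scales $j$ at which the hole in $B(x,2^{-j})$ yields an $m$-hole of $E$ inside $D_x^{j-2}$; the shift is the fixed value $2$, not a variable window.

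Your last two paragraphs chase a non-issue. There is no nearest-point transfer to handle $x\notin E$; the paper simply applies the mean-porosity hypothesis at $x\in E$ together with the digit condition from Lemma~\ref{lemma:sll}, establishing the conclusion for $\mu$-almost every $x\in E$. That (together with the trivial case $x\notin\overline E$) is all that is used downstream, where one assumes $\mu(E)>0$.
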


\begin{proof}
Observe that for all $x\in E$ there exists $N_1$ such that
\[
\#\{1\leq j\leq i\,:\,\por(E,x,2^{-j})\ge\alpha\}
>\frac{pi}2
\]
for all $i>N_1$. Lemma \ref{lemma:sll} implies that for $\mu$-almost every 
$x\in[0,1]$ we may choose a positive integer $N_2$ with $N_2>\tfrac{16}p$ such 
that
\[
\#\{1\leq j\leq i\,:\,x_j=x_{j+1}\}<\frac{pi}4
\]
for all $i>N_2$. Hence, for $\mu$-almost every $x\in[0,1]$ 
\begin{equation}\label{eq:comb}
\#\{3\leq j\leq i\,:\,\por(E,x,2^{-j})\ge\alpha\text{ and }
     x_{j-1}\neq x_j\}>\frac{pi}8
\end{equation}
for all $i>\max\{N_1,N_2\}$.

Consider $x\in[0,1]$ for which \eqref{eq:comb} holds and a positive integer
$j\ge3$ such that $x_{j-1}\neq x_j$ and 
$E$ is $\alpha$-porous for scale $j$ at $x$. An easy calculation yields that
$\mathopen]x-2^{-j},x+2^{-j}\mathclose[\subset D_x^{j-2}$. Since, by the choice 
of $m$, there is $D\in \mathcal{D}^{j+m-2}$ such that
$D\cap E=\emptyset$ and $D\subset\mathopen]x-2^{-j},x+2^{-j}\mathclose[$, 
it follows that $D_x^{j-2}$ contains an $m$-hole of $E$. Now \eqref{eq:comb} 
gives the claim.
\end{proof}

For the remaining two lemmas we need the following notation:
Let $l$ and $m$ be positive integers and $E\subset[0,1]$. 
If $Q\in\mathcal D^{(j+1)(l+m)}$ and 
$Q'\in\mathcal D^{j(l+m)}$ such that $Q\subset Q'$, we use the same notation 
$Q\prec Q'$ as in Section \ref{mainsection} with $k$ replaced by $l+m$. 
We say that $Q'\in\mathcal D^{j(l+m)}$ is porous with respect to $E$ if 
$Q\cap E=\emptyset$ for some $Q\prec Q'$. For $Q\in\mathcal D^{i(l+m)}$ and 
$j=0,\dots,i$, let $Q_j$ be the unique interval in $\mathcal D^{j(l+m)}$ for 
which $Q\subset Q_j$. Obviously, these definitions
depend on the indices $l$ and $m$, but in what follows this 
dependence does not cause any confusion and it is therefore omitted for the
sake of simplicity. We refer by $Q$ to $2^{l+m}$-adic intervals
and by $D$ to dyadic intervals. In particular for $D\in\mathcal D^i$
there exists unique intervals $D_j\in\mathcal D^j$ for all $j=0,\dots,i$.
  
Our next lemma is a consequence of the previous one.

\begin{lemma}\label{lemma:at}
Suppose that $A\subset[0,1]$ is mean $(\alpha,p)$-porous for some $\alpha>0$ 
and $0<p\le 1$ and $\mu(A)>0$. Let $m$ be a positive integer with 
$2^{-m}<\tfrac\alpha 4$. Then there are $p'>0$, positive integers $l$ and 
$N_0$ and $E\subset A$ with $\mu(E)>0$ such that 
for all positive integers $i\ge N_0$ and for all $Q\in\mathcal D^{i(l+m)}$
we have 
\[
\#\{0\leq j\leq i-1\,:\,Q_j\text{ is porous with respect to $E$ }\}\geq p'i.
\]
\end{lemma}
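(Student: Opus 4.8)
The plan is to deduce this from Lemma \ref{lemma:ballsandqubes} by a pigeonhole-type averaging argument that passes from the scales $2^{-j}$ appearing there to the coarser scales $2^{-j(l+m)}$ that define $2^{l+m}$-adic porosity. First I would apply Lemma \ref{lemma:ballsandqubes} to $A$: for $\mu$-almost every $x\in[0,1]$ we have
\[
\liminf_{i\to\infty}\frac{\#\{1\le j\le i\,:\,D_x^j\text{ contains an $m$-hole of }A\}}{i}>\frac p8.
\]
Since $\mu(A)>0$, I can therefore fix a set $E\subset A$ with $\mu(E)>0$ and an integer $N_1$ so that this density bound, with some fixed $p'_0>p/8$ in place of the $\liminf$, holds uniformly for all $i\ge N_1$ and all $x\in E$; replacing $A$ by $E$ only shrinks $A$, and an $m$-hole of $E$ is automatically an $m$-hole of $E$, which is all we shall use. (Note $m$-holes refer to emptiness of a dyadic subinterval, so shrinking the set preserves them trivially.)

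Next I would choose $l$ large. The point is that if $D_x^j$ contains an $m$-hole of $E$ for some $j$ in the block $\{r(l+m)+1,\dots,(r+1)(l+m)\}$, then the coarser $2^{l+m}$-adic interval $Q_r=D_x^{r(l+m)}$ — wait, I must be careful that the empty dyadic interval $D'\in\mathcal D^{j+m}$ sitting inside $D_x^j$ need not be one of the $2^{l+m}$-adic subintervals $Q\prec Q_r$. To handle this I would instead argue as follows: an $m$-hole at scale $j$ inside the block starting at $r(l+m)$ produces an empty dyadic interval of length $2^{-(j+m)}\ge 2^{-(r+1)(l+m)-m+1}$, hence of length $\ge 2^{-(r+1)(l+m)}$ once $l\ge m$; such an interval contains, or is contained in, a dyadic interval of length exactly $2^{-(r+1)(l+m)}$, and as long as that interval lies inside $Q_r=D_x^{r(l+m)}$ it exhibits $Q_r$ as porous with respect to $E$. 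The interval lies inside $Q_r$ whenever the hole is not adjacent to the boundary of $Q_r$; the number of "bad" boundary scales in each block is bounded independently of $l$ (at most a fixed constant, say $2$, coming from the two ends), so by taking $l$ large the density of good $m$-hole scales that actually certify the coarser porosity still exceeds a fixed positive proportion, call it $p'$.

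Concretely, I would set things up so that for each block index $r$ in $\{0,\dots,i-1\}$, the block is "porous-certifying" if it contains at least one $m$-hole scale of $E$ that is not among the boundary scales of that block. Counting: among the $i(l+m)$ scales up to level $i(l+m)$, at least $p'_0 i(l+m)$ carry $m$-holes (for $i(l+m)\ge N_1$); at most $(\text{const})\cdot i$ of these are boundary scales; so at least $p'_0 i(l+m)-(\text{const})i$ non-boundary $m$-hole scales remain, and since each block contributes at most $l+m$ scales, the number of porous-certifying blocks is at least $\frac{p'_0 i(l+m)-(\text{const})i}{l+m}=\bigl(p'_0-\frac{\text{const}}{l+m}\bigr)i\ge p' i$ once $l$ is large enough that $\frac{\text{const}}{l+m}<p'_0/2$, taking $p'=p'_0/2>p/16>0$. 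Finally set $N_0=\lceil N_1/(l+m)\rceil$; then for every $i\ge N_0$ and every $Q\in\mathcal D^{i(l+m)}$, writing $x$ for any point of $Q\cap E$ (which is nonempty if we further intersect with the full-measure set from Lemma \ref{lemma:ballsandqubes} — more honestly, we only need the estimate for those $Q$ meeting $E$, and we restrict attention to $E$), the intervals $Q_j$ agree with $D_x^{j(l+m)}$, and the count of porous-with-respect-to-$E$ indices $j\in\{0,\dots,i-1\}$ is at least $p'i$. The main obstacle is precisely this bookkeeping at block boundaries — making sure that the dyadic hole produced by porosity actually sits inside the coarse $2^{l+m}$-adic parent rather than straddling its boundary — and it is resolved by the crude observation that there are only boundedly many bad scales per block, so enlarging $l$ absorbs the loss while keeping $p'$ bounded below by a positive constant.
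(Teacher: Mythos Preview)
Your overall strategy is the paper's: apply Lemma~\ref{lemma:ballsandqubes}, pass to blocks of length $l+m$, and show by counting that a positive fraction of blocks give porous $2^{l+m}$-adic ancestors. But the key step is misdiagnosed.

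The obstruction is not about boundaries. The $m$-hole $D'\in\mathcal D^{j+m}$ always sits inside $Q_r=D_x^{r(l+m)}$, since $D'\subset D_x^j\subset Q_r$; nothing can straddle. The real issue is whether $D'$ is \emph{coarse enough} to contain a full $2^{l+m}$-adic child of $Q_r$, i.e.\ an element of $\mathcal D^{(r+1)(l+m)}$. When $j+m>(r+1)(l+m)$ the hole $D'$ is instead strictly contained in such a child $D''$, and $D''$ need not be disjoint from $E$; your sentence ``as long as that interval lies inside $Q_r$ it exhibits $Q_r$ as porous'' is therefore wrong---$D''$ is always inside $Q_r$, but it may meet $E$. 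The genuinely bad scales are the last $m$ values of $j$ in each block (those with $j>(r+1)(l+m)-m$), not two scales ``from the two ends'', and your inequality $2^{-(j+m)}\ge 2^{-(r+1)(l+m)}$ ``once $l\ge m$'' is false for precisely those $m$ scales regardless of $l$. The paper handles this by restricting to scales $k$ with $(j-1)(l+m)\le k\le j(l+m)-m$ and absorbing the $m$ discarded scales per block via the choice $m<\tfrac p{16}(l+m)$. Your pigeonhole count is robust enough to survive replacing your constant $2$ by the correct constant $m$ (still independent of $l$), so the repair is easy, but as written the step does not go through.

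A smaller gap: the lemma is stated for \emph{all} $Q\in\mathcal D^{i(l+m)}$, not only those meeting $E$. The paper first extends the $m$-hole estimate of Lemma~\ref{lemma:ballsandqubes} to every $D\in\mathcal D^i$: if $j_0$ is the largest index with $D_{j_0}\cap E\ne\emptyset$, then for $j>j_0$ every subinterval of $D_j$ is an $m$-hole of $E$, and one either invokes the estimate at level $j_0$ (if $j_0\ge N$) or gets the count trivially from these empty levels (if $j_0<N$, using $i>N/(1-\tfrac p8)$). You are right that downstream only intervals meeting $E$ enter the sums, so this does not break the application, but it is needed for the lemma as stated.
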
 

\begin{proof} 
Lemma \ref{lemma:ballsandqubes} implies the existence of $E\subset A$ with 
$\mu(E)>0$ and that of a positive integer $N$ such that for all positive 
integers $i\ge N$ and for all $D\in\mathcal D_i$ with $D\cap E\ne\emptyset$ 
we have 
\begin{equation}\label{eq:akuankka}
   \#\{1\leq j\leq i\,:\,D_j\text{ contains an $m$-hole of $E$}\}>\frac{pi}8.
  \end{equation}  
It follows that \eqref{eq:akuankka} is valid for all $D\in\mathcal D^i$
when $i>N/(1-\tfrac p8)$. To see this, let $D\in\mathcal D^i$.
Denote by $j_0$ the largest $j$ with
$E\cap D_j\neq\emptyset$. If $j_0\geq N$, then the claim clearly holds
by \eqref{eq:akuankka}. On the other hand, assuming that $j_0<N$, we obtain
\[
\{1\leq j\leq i\,:\,D_j\text{ contains an $m$-hole of $E$}\}\geq i-j_0
\geq i-N>\frac{pi}8
\]
by the choice of $i$.
 
Choose a positive integer $l>N/(1-\tfrac p8)$ so large that 
$m<\tfrac p{16}(m+l)$. Dividing the sequence
$1,2,\dots,i(m+l)$ into successive blocks of length $m+l$, it follows that for
all positive integers $i$ and for all $D\in\mathcal D^{i(m+l)}$ we have
\begin{align*}
    \#\{1\leq j\leq i\,:&\,D_k\text{ contains an $m$-hole of $E$ for some}\\
    &(j-1)(m+l)\leq k\leq j(m+l)-m\}>\frac{pi}{16}.
  \end{align*}
Indeed, if this is not the case, then
\begin{align*}
  \#\{1\leq k\leq i(m+l)\,:\,&D_k\text{ contains an $m$-hole of $E$}\}\\
    &\leq\frac{pi}{16} l+im\leq\frac{pi}8(m+l)
\end{align*}
giving a contradiction to \eqref{eq:akuankka}. But if $D_k$ contains
an $m$-hole of $E$ for some $(j-1)(m+l)\leq k\leq j(m+l)-m$, then
$D_{(j-1)(m+l)}$ is porous with respect to $E$, and we obtain 
\[
\#\{0\leq j\leq i-1\,:\,Q_j\text{ is porous with respect to $E$}\}
>\frac{pi}{16}
\]
for all $Q\in\mathcal D^{i(m+l)}$.
\end{proof}

\begin{remark}\label{doublecount}
The use of $l$ and $m$ prevents us from counting same holes twice.  
\end{remark}

Next we introduce for each interval $Q$ a weight $\eta(Q)$ which is equal to
1 if $Q$ is not porous and 1 minus the portion of the measure of the hole
if $Q$ is porous. For this purpose, let $l$ and $m$ be positive integers 
and let $E\subset[0,1]$. Define 
\begin{equation*}
\eta(Q)
=\begin{cases}
1\text{ if }Q\text{ is not porous w.r.t. } E\\
1-w(i(l+m)+1)\times\cdots\times w((i+1)(l+m))\text{ otherwise}
\end{cases}
\end{equation*}
for all non-negative integers $i$ and $Q\in\mathcal D^{i(m+l)}$.

In formula \eqref{osa1} of Lemma \ref{lemma:estimates} we prove that 
the product of weights of all predecessors of $Q$ converges uniformly to 0 
as the size of $Q$ decreases. 
The reason for this is quite simple: on a positive percentage of scales there
are porous intervals $Q$ with $\eta(Q)<1$ and the product converges to 0
since the measure of the holes decreases very slowly. On the other hand,
the proof of formula \eqref{osa2} is based on the definition of $\eta$. It 
guarantees that we may utilize porous scales.  

\begin{lemma}\label{lemma:estimates}
Suppose that $A\subset[0,1]$ is mean $(\alpha,p)$-porous for some $\alpha>0$ 
and $0<p\leq1$ and $\mu(A)>0$. Let $p'>0$, $E\subset A$ and positive integers 
$m$, $l$ and $N_0$ be as in Lemma \ref{lemma:at}. Then for all positive 
integers $i\ge N_0$ and for all $Q\in\mathcal D^{i(m+l)}$ we have
\begin{equation}\label{osa1}
\prod\limits_{j=0}^{i}\eta(Q_j)\leq c(i)
\end{equation} 
where $c(i)$ is a constant depending only on $i$, $l$, $m$ and
$p'$ and $c(i)\rightarrow 0$ as $i\rightarrow\infty$. (The dependence on all
other parameters but $i$ is irrelevant for our purposes.) Moreover, 
\begin{equation}\label{osa2} 
\sum_{\substack{Q\in\mathcal D^{i(m+l)}\\Q\cap E\neq\emptyset}}
   \frac{\mu(Q)}{\prod\limits_{j=0}^{i-1}\eta(Q_j)} \leq \mu([0,1])=1 
\end{equation} 
for all positive integers $i$.
\end{lemma}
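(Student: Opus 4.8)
The plan is to prove the two assertions separately, since they rely on very different mechanisms. For \eqref{osa1}, the key point is that on a positive proportion of scales $Q_j$ is porous with respect to $E$, and on those scales $\eta(Q_j)=1-w(j(l+m)+1)\cdots w((j+1)(l+m))$ is strictly less than $1$ by a slowly decaying amount. First I would invoke Lemma~\ref{lemma:at} to fix $p'>0$, $l$, $m$, $N_0$ and $E\subset A$ so that for $i\ge N_0$ and every $Q\in\mathcal D^{i(m+l)}$ at least $p'i$ of the indices $j\in\{0,\dots,i-1\}$ have $Q_j$ porous. On each such porous scale I would bound $w(j(l+m)+1)\cdots w((j+1)(l+m))$ from below; since $w(n)=1/\log(n+2)$ and there are $l+m$ factors, each at least $1/\log((i+1)(l+m)+2)$, the product is at least $\bigl(\log((i+1)(l+m)+2)\bigr)^{-(l+m)}$, so $\eta(Q_j)\le 1-\bigl(\log((i+1)(l+m)+2)\bigr)^{-(l+m)}$ on those scales and $\eta(Q_j)\le 1$ always. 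Hence
\[
\prod_{j=0}^i\eta(Q_j)\le\Bigl(1-\bigl(\log((i+1)(l+m)+2)\bigr)^{-(l+m)}\Bigr)^{p'i}=:c(i).
\]
The main obstacle here is checking that $c(i)\to0$: one must verify that $p'i\cdot\bigl(\log((i+1)(l+m)+2)\bigr)^{-(l+m)}\to\infty$, i.e. that a linear term beats a fixed power of a logarithm; this is where the requirement that $w(i)$ decay only logarithmically (as flagged in Remark~\ref{irrelevant}) is essential. Using $1-x\le e^{-x}$ then gives $c(i)\le\exp\bigl(-p'i(\log((i+1)(l+m)+2))^{-(l+m)}\bigr)\to0$, and $c(i)$ manifestly depends only on $i,l,m,p'$.

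For \eqref{osa2} the idea is that dividing $\mu(Q)$ by $\eta(Q_j)$ on a porous scale exactly compensates for the fact that the measure was redistributed unevenly among the halves at that scale. I would argue by induction on $i$. The base case $i=0$ is trivial since the empty product is $1$ and $\mu([0,1])=1$. For the inductive step, group the intervals $Q\in\mathcal D^{i(m+l)}$ with $Q\cap E\neq\emptyset$ according to their parent $Q'=Q_{i-1}\in\mathcal D^{(i-1)(m+l)}$, and observe that
\[
\sum_{\substack{Q\prec Q'\\ Q\cap E\neq\emptyset}}\mu(Q)\le\eta(Q')\,\mu(Q')
\]
for every $Q'$ meeting $E$. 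Indeed, if $Q'$ is not porous with respect to $E$ then $\eta(Q')=1$ and the sum of $\mu(Q)$ over all $Q\prec Q'$ is just $\mu(Q')$; if $Q'$ is porous then some $Q\prec Q'$ has $Q\cap E=\emptyset$, and that $Q$ sits inside the dyadic subinterval of $Q'$ carrying measure $w(i(l+m)+1)\cdots w((i+1)(l+m))\,\mu(Q')$ or smaller — here one uses the explicit product formula $\mu(I_{j_1\dots j_i})=\prod(1-s(k))^{j_k}s(k)^{1-j_k}$ together with $s(k)\ge w(k)$ and $1-s(k)\ge w(k)$ — so the intervals meeting $E$ carry at most $(1-w(i(l+m)+1)\cdots w((i+1)(l+m)))\mu(Q')=\eta(Q')\mu(Q')$. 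Dividing the desired sum over $\mathcal D^{i(m+l)}$ by the $\eta$-products, peeling off the last factor $\eta(Q_{i-1})$, and applying the displayed inequality reduces the level-$i$ sum to the level-$(i-1)$ sum $\sum_{Q'\cap E\neq\emptyset}\mu(Q')/\prod_{j=0}^{i-2}\eta(Q'_j)$, which is $\le1$ by induction.

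The trickier of the two is \eqref{osa1}, specifically the asymptotic $c(i)\to0$, since it is precisely the place where the slow (logarithmic) decay of the weights is exploited; the geometric bookkeeping in \eqref{osa2} is routine once the one-step inequality $\sum_{Q\prec Q',\,Q\cap E\neq\emptyset}\mu(Q)\le\eta(Q')\mu(Q')$ is set up, which in turn hinges on the hole guaranteed by "$Q'$ porous with respect to $E$" lying in the low-measure half. A minor technical point to watch is the continuous alternation of $s(k)$ between $w(k)$ and $1-w(k)$: I would note that regardless of parity one has $\min\{s(k),1-s(k)\}=w(k)$, so the lower bounds above hold uniformly, and the choice of which half is emptied by the hole does not affect the estimate.
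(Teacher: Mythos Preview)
Your plan is essentially correct and follows the paper's structure, with one simplification and one misstatement that you should fix.

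For \eqref{osa1} your argument is actually cleaner than the paper's. The paper applies Lemma~\ref{lemma:at} successively at the intermediate levels $1/p',2/p',\ldots,k/p'$ (after shrinking $p'$ so that $1/p'\in\mathbb N$), obtaining at least one porous scale in each block and hence the bound $\prod_{q=1}^{k}\bigl(1-w((\tfrac{q}{p'}-1)(l+m)+1)\cdots w(\tfrac{q}{p'}(l+m))\bigr)$, whose logarithm is controlled by the divergent series $\sum_q(\log q)^{-(l+m)}$. You instead use a single application of Lemma~\ref{lemma:at}, replace every factor $w(\cdot)$ by the smallest one occurring up to level $i$, and reduce to $p'i\cdot(\log i)^{-(l+m)}\to\infty$. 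Both arguments work; yours is more direct and loses nothing, since only $c(i)\to0$ is needed.

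For \eqref{osa2} your inductive scheme and the one--step inequality $\sum_{Q\prec Q',\,Q\cap E\neq\emptyset}\mu(Q)\le\eta(Q')\mu(Q')$ are exactly what the paper does. However, the justification you give is stated backwards. You write that the hole $Q$ ``carries measure $w(\cdots)\cdots w(\cdots)\mu(Q')$ or smaller'' and speak of it ``lying in the low-measure half''; what is actually needed (and what your own concluding remark about $\min\{s(k),1-s(k)\}=w(k)$ correctly points toward) is a \emph{lower} bound: every $Q\prec Q'$ satisfies $\mu(Q)\ge\bigl(\prod_k w(k)\bigr)\mu(Q')$ by the product formula, so removing any single hole removes at least that much mass, leaving at most $\eta(Q')\mu(Q')$. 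There is no need for the hole to be in any particular half. Also, since $Q'\in\mathcal D^{(i-1)(m+l)}$, the relevant weight product is $w((i-1)(l+m)+1)\cdots w(i(l+m))$, not $w(i(l+m)+1)\cdots w((i+1)(l+m))$ as you wrote; this is the formula for $\eta(Q')$, not for $\eta$ at level $i$.
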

 
\begin{proof}
By decreasing $p'$, we may assume that $\tfrac 1{p'}$ is an integer such that 
$\tfrac 1{p'}>N_0$. (Note that in Lemma \ref{lemma:at} the integer $N_0$ is 
independent of $p'$.) Consider $i>\tfrac 1{p'}$. Letting $k$ be a positive 
integer with $\tfrac k{p'}<i\le\tfrac{k+1}{p'}$ and applying Lemma 
\ref{lemma:at} successively for the indices $\tfrac 1{p'}$, $\tfrac 2{p'}$, 
$\dots,\tfrac k{p'}$, gives
\begin{align*}
 \prod_{j=0}^i&\eta(Q_j)\le\prod_{q=1}^k
   \left(1-w((\tfrac{q}{p'}-1)(l+m)+1)\times\cdots\times
   w(\tfrac{q}{p'}(l+m))\right)\\
 =&\exp\left(\sum\limits_{q=1}^{k}\log\left(1-w((\tfrac{q}{p'}-1)(l+m)+1)
   \times\cdots\times w(\tfrac{q}{p'}(l+m))\right)\right)\\
 <&\exp\left(-\sum_{q=1}^{k}w((\tfrac{q}{p'}-1)(l+m)+1)\times\cdots
    \times w(\tfrac{q}{p'}(l+m))\right)\\
 <&\exp\left(-\sum_{q=1}^{k}\left(\log(\tfrac{q}{p'}
     (l+m)+2)\right)^{-(l+m)}\right)\xrightarrow[k\to\infty]{} 0, 
\end{align*}  
implying \eqref{osa1}. 

Inequality \eqref{osa2} is valid since
\begin{align*}
  \sum_{\substack{Q\in\mathcal D^{i(m+l)}\\Q\cap E\neq
   \emptyset}}\frac{\mu(Q)}{\prod\limits_{j=0}^{i-1}\eta(Q_j)}
 &=\sum_{\substack{Q'\in\mathcal D^{(i-1)(m+l)}\\Q'\cap E\neq\emptyset}} 
   \frac{1}{\prod\limits_{j=0}^{i-1}\eta(Q_j')}
   \sum_{\substack{Q\prec Q'\\Q\cap E\neq\emptyset}}\mu(Q)\\
 &\le\sum_{\substack{Q'\in\mathcal D^{(i-1)(m+l)}\\ 
    Q'\cap E\neq\emptyset}}\frac{\eta(Q')\mu(Q')}{\eta(Q')\prod\limits_{j=0}^{i-2}
    \eta(Q_j')}\\
 & =\sum_{\substack{Q'\in\mathcal D^{(i-1)(m+l)}\\ 
    Q'\cap E\neq\emptyset}}\frac{\mu(Q')}{\prod\limits_{j=0}^{i-2}\eta(Q_j')}.
\end{align*}
Here the inequality follows by dividing the inner summation into the two
possible cases: either $Q\cap E\ne\emptyset$ for all $Q\prec Q'$ or
$Q\cap E=\emptyset$ for some $Q\prec Q'$. 
\end{proof}

Now we are ready to prove the main theorem of this section. 
  
\begin{proof}[Proof of Theorem \ref{thm}]  
Assume contrary to the claim \eqref{1} that there is $\alpha>0$ and a mean 
$(\alpha,p)$-porous set $A\subset[0,1]$ with $\mu(A)>0$. Using Lemmas 
\ref{lemma:at} and \ref{lemma:estimates}, we find positive integers $m$ and
$l$ and $E\subset A$ with $\mu(E)>0$ satisfying the estimates \eqref{osa1} 
and \eqref{osa2}. Taking $N_0$ as in Lemma \ref{lemma:estimates}, we get for
all $i\ge N_0$
\begin{align*}
 \frac{\mu(E)}{c(i)}\le\sum_{\substack{Q\in\mathcal D^{(i+1)(m+l)}\\ 
   Q\cap E\neq\emptyset}}\frac{\mu(Q)}{\prod\limits_{j=0}^i\eta(Q_j)}\le 1,
\end{align*}
giving $\mu(E)\leq c(i)\to 0$ as $i\to\infty$ contrary to $\mu(E)>0$. This 
proves \eqref{1}.
  
An elementary calculation proves \eqref{2}, for details see 
\cite [Example 4]{EJJ}. Clearly, $\frac 18$ is not the best 
possible value, but this is irrelevant for our purposes.

For the remaining part (3), given $0<\alpha<\frac 12$, fix a positive integer
$n$ such that $2^{-n}<\frac 12-\alpha$.
As in Lemma \ref{lemma:sll}, we obtain 
\[
\lim_{i\to\infty}\frac{\#\{1\leq j\leq i\, :\, x_k=x_{k+1}\text{ for some }
  j\le k<j+n\}}{i}=0
\]
for $\mu$-almost all $x\in\mathopen[0,1\mathclose]$. 
Thus, for $0<p<1$, we find for $\mu$-almost all 
$x\in\mathopen[0,1\mathclose]$ a positive integer $N$ such that
\[ 
\{1\leq j\leq i\, :\, x_k=x_{k+1}\text{ for some } j\le k<j+n\}<(1-p)i
\]
for all $i>N$. This implies that $\mu$ is mean $(\alpha,p)$-porous at 
$\mu$-almost all points $x\in\mathopen[0,1\mathclose]$ (for details, see 
\cite [Example 4]{EJJ}), and consequently (3) holds. 
\end{proof}

\end{document}